\numberwithin{equation}{section}
\declaretheoremstyle[
  bodyfont=\normalfont\itshape,
  headformat=\NAME\ \NUMBER\NOTE,
]{myplain}
\declaretheoremstyle[
  headformat=\NAME\ \NUMBER\NOTE,
]{mydefinition}
\newcommand{\envqed}{{\lower-0.3ex\hbox{$\triangleleft$}}}
\declaretheorem[style=myplain,numberwithin=section]{theorem}
\declaretheorem[style=mydefinition,numberlike=theorem,qed=\envqed]{remark}
\let\epsilon\varepsilon
\let\phi\varphi
\let\rho\varrho
\renewcommand{\i}{\mathrm{i}}
\providecommand\e{}
\renewcommand{\e}{\mathrm{e}}
\providecommand\R{}
\renewcommand{\R}{\mathbb{R}}
\newcommand{\I}{\operatorname{I}}
\renewcommand{\Im}{\operatorname{Im}}
\newcommand{\sech}{\operatorname{sech}}
\newcommand{\mass}{\mathcal{M}}
\newcommand{\momentum}{\mathcal{P}}
\newcommand{\energy}{\mathcal{E}}
\newcommand{\orcid}[1]{ORCID:~\href{https://orcid.org/#1}{#1}}
\newenvironment{keywords}{\par\textbf{Key words.}}{\par}
\newenvironment{AMS}{\par\textbf{AMS subject classification.}}{\par}
\title{Conserving mass, momentum, and energy for the Benjamin-Bona-Mahony, Korteweg-de Vries, and nonlinear Schr{\"o}dinger equations}
\author[1]{Hendrik~Ranocha\thanks{\orcid{0000-0002-3456-2277}}}
\affil[1]{Institute of Mathematics, Johannes Gutenberg University Mainz, Staudingerweg 9, 55128 Mainz, Germany}
\author[2]{David~I.~Ketcheson\thanks{\orcid{0000-0002-1212-126X}}}
\affil[2]{King Abdullah University of Science and Technology (KAUST),
Computer Electrical and Mathematical Science and Engineering Division (CEMSE),
Thuwal, 23955-6900, Saudi Arabia}
\date{April 10, 2026} 
\begin{document}

\maketitle

\begin{abstract}
\noindent
  We propose and study a class of arbitrarily high-order numerical
discretizations that preserve multiple invariants and
are essentially explicit (they do not require the solution of any large systems
of algebraic equations).
In space, we use Fourier Galerkin methods, while in time we use a combination of orthogonal projection and relaxation.
We prove and numerically demonstrate the conservation properties of the method
by applying it to the Benjamin-Bona-Mahony, Korteweg-de Vries, and nonlinear Schr\"odinger (NLS) PDEs as well as a hyperbolic approximation of NLS.
For each of these equations, the proposed schemes conserve mass, momentum, and energy up to numerical precision.
We show that this conservation leads to reduced growth of numerical errors for long-term simulations.

\end{abstract}

\begin{keywords}
  Fourier Galerkin methods,
  additive Runge-Kutta methods,
  structure-preserving methods,
  Benjamin-Bona-Mahony equation,
  Korteweg-de Vries equation,
  nonlinear Schr{\"o}dinger equation
\end{keywords}

\begin{AMS}
  65M60, 
  65M70, 
  65M12,  
  65M20 
\end{AMS}

\section{Introduction}
Many important partial differential equations (PDEs) possess conserved
quantities (such as mass, momentum, or energy) that are fundamental properties
of the corresponding physical system.  Preserving these invariants by
numerical methods is essential not only in obtaining accurate solutions but
also to ensure that the solutions are physically meaningful at all.
Therefore, great effort has gone into the development of \emph{structure-preserving}
numerical methods.  Most often, such methods are designed to preserve one, or
in some cases, two such invariants.  However, many models possess more than two
invariants; indeed, fully integrable systems (such as the Korteweg-de Vries or
nonlinear Schr\"odinger equations) possess infinitely many.

In this work we present a class of full (space and time) discretizations
that conserve three invariants for three of the
most important nonlinear dispersive wave models:
the Benjamin-Bona-Mahony (BBM), Korteweg-de Vries (KdV),
and nonlinear Schr{\"o}dinger (NLS) equations.
These invariants are typically referred to as mass, momentum, and energy
in the literature, although it should be noted that they are in general
used by way of analogy and do not always
refer to physical mass, momentum and energy in applications
(see e.g. \cite{ablowitz1979evolution,karczewska2015energy,ali2014formulation}).
For periodic boundary conditions, each of these equations conserves
the total mass $\mass$, momentum $\momentum$, and energy $\energy$
given as follows.

The Benjamin-Bona-Mahony equation\footnote{This equation is often written with an additional linear term $+ u_x$, which can be removed by the transformation $u \mapsto u - 1$ to obtain the normalization also used in \cite{gavrilyuk2022hyperbolic,bleecke2025asymptotic}.} \cite{peregrine1966calculations,benjamin1972model}
\begin{equation}
\label{eq:bbm}
  u_t + u u_x - u_{txx} = 0
\end{equation}
has the three invariants \cite{olver1979euler}
\begin{equation}
\label{eq:bbm_invariants}
  \mass = \int u \dif x,
  \qquad
  \momentum = \int \left( \frac{1}{2} u^2 + \frac{1}{2} (u_x)^2 \right) \dif x,
  \qquad
  \energy = \int \frac{1}{6} u^3 \dif x.
\end{equation}
The Korteweg-de Vries equation \cite{boussinesq1872theorie,korteweg1895change}
\begin{equation}
\label{eq:kdv}
  u_t + u u_x + u_{xxx} = 0
\end{equation}
has a countably infinite number of invariants \cite{miura1968korteweg};
the first three of them are
\begin{equation}
\label{eq:kdv_invariants}
  \mass = \int u \dif x,
  \qquad
  \momentum = \int \frac{1}{2} u^2 \dif x,
  \qquad
  \energy = \int \left( \frac{1}{2} (u_x)^2 - \frac{1}{6} u^3 \right) \dif x.
\end{equation}
The nonlinear Schr{\"o}dinger equation \cite{sulem2007nonlinear,yang2010nonlinear}
\begin{equation}
\label{eq:nls-u}
  \i u_t + u_{xx} + \beta |u|^2 u = 0
\end{equation}
also has infinitely many invariants.
The first three of them are
\begin{equation}
\label{eq:nls-u_invariants}
  \mass = \int |u|^2 \dif x,
  \qquad
  \momentum = \int \Im (\overline{u} u_x) \dif x,
  \qquad
  \energy = \int \left( |u_x|^2 - \frac{\beta}{2} |u|^4 \right) \dif x.
\end{equation}

Our approach employs the widely-used method of lines, in which PDEs are
discretized in space and the resulting ordinary differential equation (ODE)
system is then integrated.
In order to preserve an invariant at the fully-discrete level within this
framework, both the spatial and the temporal discretization must be conservative.
Due to the spatial discretization, the quantity conserved by the numerical
method is some discrete approximation of the original invariant.

\subsection{Spatial discretizations}

Since conservation of invariants can be proven using integration by parts,
many conservative spatial discretizations are created by mimicking this
procedure at the discrete level, either using a Galerkin approach (assuming
exact integration of all nonlinear terms) or by using summation-by-parts (SBP)
operators \cite{fernandez2014review,svard2014review}.
While SBP-based methods can be constructed to conserve two invariants for the
equations of interest
\cite{ranocha2020relaxationHamiltonian,ranocha2021broad,linders2023resolving,ranocha2025high},
they do not appear to be able to conserve three or more invariants.
The underlying reason for this is that SBP discretizations conserving the
total mass are based on split forms of the nonlinear terms, which are related
to entropy-conserving methods for conservation laws in the classical setting
of Tadmor \cite{tadmor1987numerical,tadmor2003entropy}. The nonlinear term
of the BBM/KdV equation is the same as in Burgers' equation
$u_t + (u^2 / 2)_x = 0$. Using Tadmor's theory, we can construct numerical
fluxes that can either conserve the quadratic invariant $\int u^2 / 2$
or the cubic invariant $\int u^3 / 6$, but not both at the same time
(since entropy-conservative fluxes are determined uniquely for scalar
conservation laws).

One alternative approach to construct spatial discretizations conserving
multiple invariants is to use a reasonable baseline discretization and
add correction terms enforcing the desired conservation properties
\cite{abgrall2018general,abgrall2022reinterpretation}.
In this vein, Chen et al.\ \cite{chen2022new} introduced local discontinuous Galerkin (LDG) methods for
KdV with additional unknown stabilization parameters to enforce conservation of
the first three invariants.
However, initial numerical experiments in which we have extended this approach to
fully-discrete conservation suggest that this approach is less robust
in practice, at least when combined with the temporal discretizations described below.

The Ablowitz-Ladik lattice \cite[Ch.~3]{ablowitz2004discrete} can also be
viewed as a spatial discretization of NLS that possesses an infinite set of
conserved quantities related to those of NLS.

Here we turn instead to Galerkin methods. Many classical finite element schemes
use piecewise polynomials with a prescribed degree of regularity at cell boundaries;
since the derivative of such a polynomial does not lie in the same space
(it has lower regularity), such methods cannot be used to conserve invariants involving
derivatives.  However, Fourier Galerkin methods are promising
since the derivative is an endomorphism on the space of trigonometric
polynomials.  Indeed, Maday and Quarteroni \cite[Lemma~II.1]{maday1988error} showed that the
Fourier Galerkin method conserves the first three KdV invariants.
Here we extend this result to the NLS and BBM equations, and
provide efficient time discretizations that lead to a fully-discrete conservative scheme.
Our focus is in constructing fully-discrete methods conserving three invariants;
for further refined analyses of the convergence properties of Fourier semidiscretizations building upon the work of Maday and Quarteroni for the KdV equation, we refer to \cite{bjorkavaag2007exponential,kalisch2007rate}.

\subsection{Temporal discretizations}

For temporal conservation of invariants, the literature on structure-preserving
ODE integrators  is extensive; for an overview we refer the reader to the
monograph \cite{hairer2006geometric}.  We will highlight some of the most
relevant approaches.
Linear or affine invariants are automatically preserved by the most common types of discretizations, e.g., general linear methods such as Runge-Kutta methods and linear multistep methods.
Special implicit methods can be designed to conserve quadratic invariants (symplectic methods \cite[Sections~IV.2 and VI.7]{hairer2006geometric})
or to preserve the Hamiltonian in the case of Hamiltonian systems (e.g., discrete gradient methods (that can also preserve other functionals) or the average vector field method \cite{mclachlan1999geometric,quispel2008new,celledoni2009energy,hairer2010energy}).
We mention here also the scalar auxiliary variable (SAV) method (e.g.,
\cite{li2021linear}) in which the equations to be solved are augmented by one
or more additional equations related to the conserved quantity or quantities.
For more general invariants, one can simply project the
solution back onto the conservative manifold after each step.  This can be done using
orthogonal projection \cite[Section~IV.4]{hairer2006geometric} or by projecting along a line determined by the numerical
ODE solver; the latter approach is known as relaxation
\cite{ketcheson2019relaxation,ranocha2020relaxation,ranocha2020general}.

The basic idea of relaxation methods dates back to \cite{sanz1982explicit} and
\cite[pp.~265--266]{dekker1984stability}, and has recently been developed in
a general setting in \cite{ketcheson2019relaxation,ranocha2020relaxation,ranocha2020general}.
It has been combined with Runge-Kutta methods \cite{ranocha2020relaxation},
linear multistep methods \cite{ranocha2020general},
residual distribution schemes \cite{abgrall2022relaxation},
implicit-explicit (IMEX) methods \cite{kang2022entropy,li2022implicit},
and multi-derivative methods \cite{ranocha2023functional,ranocha2024multiderivative}.
Some applications include
Hamiltonian problems \cite{ranocha2020relaxationHamiltonian,zhang2020highly,li2023relaxation},
compressible flows \cite{yan2020entropy,ranocha2020fully,doehring2025paired},
dispersive wave equations \cite{li2025time,ranocha2025structure,lampert2025structure,mitsotakis2021conservative},
and asymptotic-preserving methods for hyperbolizations \cite{biswas2025traveling,bleecke2025asymptotic,giesselmann2026convergence}.
The advantage of such methods is that they can be essentially explicit, requiring only the
solution of a scalar nonlinear equation at each step.
The relaxation approach
has been extended in order to conserve multiple invariants \cite{biswas2023multiple,biswas2024accurate}, although
the resulting method is more costly and less robust, occasionally requiring the
use of small timesteps.

The time discretization we use in the present work is an extension of
our previous work \cite{ranocha2025high}, in which we combine orthogonal
projection with relaxation to conserve mass, momentum, and energy for the BBM, KdV,
and NLS equations.

\subsection{Full discretizations}

From the large body of literature on structure-preserving methods for
the BBM, KdV, and NLS equations, we are only aware of three (rather recent) papers
developing fully-discrete methods conserving mass, momentum, and energy
for one of these equations:
Zheng and Xu \cite{zheng2024invariants} use a fully implicit LDG method
with Lagrange multipliers and a spectral deferred correction approach for the
KdV equation,
and Akrivis et al.\ \cite{akrivis2025high} use a fully-implicit space-time
finite element method with Lagrange multipliers for the NLS equation.
After the preprint of this manuscript was published on arxiv.org, the paper \cite{shi2026class} presenting a fully-discrete method conserving mass, momentum, and energy for the KdV equation using Fourier Galerkin in space and the projection method based on discrete gradients of \cite{dahlby2011preserving} in time applied to exponential integrators was published.
In contrast to these methods, our schemes are less implicit and more flexible
in terms of the choice of the temporal discretization.
Instead of designing a conservative spatial scheme, for one-dimensional
problems with smooth enough solutions one can simply use a highly resolved
Fourier collocation method that is essentially exact (up to machine precision).
Alvarez et al.\  \cite{alvarez2010multi} combined this with
projection methods in time applied to a singly-diagonally-implicit Runge-Kutta (SDIRK) method to
conserve mass, momentum, and energy for the KdV equation.

We cannot provide a complete overview of all papers on structure-preserving
methods for the BBM, KdV, and NLS equations here. However, we will briefly
summarize some related literature and apologize for any omissions.

For the NLS equation, the popular split-step Fourier pseudospectral
method provides conservation of mass only (see e.g. \cite{bao2002time}).
The linear invariant of the BBM and KdV equations is conserved by most methods we are aware of.

There are some general methodological developments that have been applied to several equations.
Discrete variational derivative methods can be used to conserve two invariants of BBM, KdV, and NLS \cite{koide2009nonlinear,furihata2010discrete}.
Frasca-Caccia and Hydon have designed bespoke finite difference methods conserving local forms of the conservation laws for two invariants for BBM, KdV, and NLS \cite{frasca2021numerical,frasca2020simple}.

There are many methods conserving two invariants at the fully discrete level.
For the NLS equation, some mass- and energy-conserving methods are studied in
\cite{delfour1981finite,sanz1984methods,akrivis1991fully,henning2017crank,besse2004relaxation,besse2021energy,cui2021mass,bai2024high,biswas2024accurate,ranocha2025high};
methods conserving the mass and the Hamiltonian structure are analyzed in \cite{cano2006conserved}.
SBP operators can be combined with relaxation to conserve mass and either momentum or energy for the BBM and KdV equations \cite{ranocha2021broad,linders2023resolving,ranocha2020relaxationHamiltonian} and their hyperbolic approximations \cite{bleecke2025asymptotic,biswas2025traveling}.
The SAV method can be used to conserve mass,
momentum, and a modified energy for generalized KdV equations \cite{yang2022arbitrarily}.

Fourier Galerkin space discretization has recently been combined with a symplectic Runge-Kutta time discretization
to achieve fully-discrete conservation of the linear and quadratic invariants \cite{dougalis2022high}.
Andrews and Farrell \cite{andrews2025conservative} have developed fully
implicit time integration methods able to conserve multiple invariants
(up to quadrature and solver tolerances), and applied the procedure to
conserve the energy of the BBM equation.

There are also techniques that directly develop a conservative space-time
discretization; these include Grant's method
\cite{grant2015bespoke,frasca2021numerical}, the discrete variational derivative method
\cite{furihata2010discrete}, and the discrete multiplier method
\cite{wan2016multiplier,schulz2025minimal}.  In principle these approaches could
be applied to conserve an arbitrary number of invariants\footnote{Such extensions may involve techniques proposed for discrete gradients in \cite{dahlby2011preserving}.}, but in practice
conserving more than two leads to major difficulties, and has not been demonstrated.

\subsection{Contributions and outline}
As we can see from the foregoing, most structure-preserving methods
have one or more of the following drawbacks: they conserve only one or
two invariants, they are limited to second order, and they require the
solution of large systems of algebraic equations. Here we provide methods
that combine the following advantages:
\begin{itemize}
    \item conservation of mass, momentum, and energy;
    \item arbitrary order in space and time;
    \item only a scalar equation must be solved at each step.
\end{itemize}
An additional advantage of the present approach is that we can use any baseline
method, e.g., an IMEX method for the KdV and NLS equations to handle the stiff
linear terms efficiently, and an explicit method for the non-stiff BBM equation.

The most important restrictions of these new methods are that they
require periodic boundary conditions and they provide only global
(not local) conservation.

In Section~\ref{sec:spatial_semidiscretizations} we focus on spatial
discretization, showing that Fourier Galerkin methods provide the desired
conservation properties.
We also point out some crucial subtleties to obtain the desired results in
(acceptably efficient) implementations, and demonstrate the semidiscrete
conservation numerically.
In Section~\ref{sec:time_discretizations} we introduce our conservative time
discretization method, which combines orthogonal projection with relaxation and
is an extension of that proposed in our recent work \cite{ranocha2025high}.
Fully-discrete conservation is demonstrated through numerical experiments.
In Section~\ref{sec:error-growth} we study the long-term error behavior of
our conservative methods compared with methods that conserve fewer invariants.
In Section~\ref{sec:performance_comparisons} we perform a practical comparison
of computational efficiency between our proposed methods and some recent methods
from the literature.
In Section~\ref{sec:hyperbolization_nls} we apply the same ideas to a first-order hyperbolic approximation
of NLS.  Some conclusions and future directions are discussed in Section~\ref{sec:summary}.

\section{Spatial semidiscretizations}
\label{sec:spatial_semidiscretizations}

We use Fourier Galerkin methods to discretize the PDEs in space.
Let $T_k$ be the space of real-valued trigonometric polynomials of degree
at most $k$ and let $P$ be the $L^2$ projection\footnote{We omit
the index $k$ from $P$ since we will only use a fixed $k$ in each equation,
not multiple spaces.} onto $T_k$. We denote the $L^2$ inner product
on the spatial domain by $\langle \cdot, \cdot \rangle$.
Next, we will introduce the resulting semidiscretizations and prove that
they conserve the mass, momentum, and energy for each equation.

\subsection{Benjamin-Bona-Mahony equation}

The Fourier Galerkin semidiscretization of the BBM equation \eqref{eq:bbm}
is given by
\begin{equation}
\label{eq:bbm_semi}
  \partial_t u = -(\I - \partial_x^2)^{-1} \partial_x P \frac{u^2}{2}.
\end{equation}

\begin{theorem}
\label{thm:bbm_semi}
  The semidiscretization \eqref{eq:bbm_semi} of the BBM equation \eqref{eq:bbm}
  conserves the mass, momentum, and energy \eqref{eq:bbm_invariants}.
\end{theorem}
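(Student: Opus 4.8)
The plan is to verify conservation of each of the three invariants in turn by computing its time derivative along solutions of the semidiscretization \eqref{eq:bbm_semi} and showing it vanishes. Throughout, the key structural facts I would exploit are that the operator $(\I - \partial_x^2)^{-1}$ is self-adjoint and commutes with $\partial_x$ on the space of trigonometric polynomials, that $\partial_x$ is skew-adjoint with respect to $\langle \cdot, \cdot \rangle$ (using periodicity), and that the $L^2$ projection $P$ is self-adjoint and acts as the identity on $T_k$. Writing $f = (\I - \partial_x^2)^{-1} \partial_x P \tfrac{u^2}{2}$, the semidiscretization reads $\partial_t u = -f$, and since $u \in T_k$ we have $\partial_t u \in T_k$ as well.

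For the mass $\mass = \int u \dif x = \langle u, 1 \rangle$, I would compute $\tfrac{\dif}{\dif t}\mass = \langle \partial_t u, 1 \rangle = -\langle f, 1 \rangle$. Since $f$ lies in the range of $\partial_x$ applied to a trigonometric polynomial, its integral over the periodic domain vanishes (the constant Fourier mode of any exact derivative is zero, and $(\I-\partial_x^2)^{-1}$ preserves this property as it acts diagonally in Fourier space). Hence $\tfrac{\dif}{\dif t}\mass = 0$. For the energy $\energy = \int \tfrac{1}{6} u^3 \dif x$, I would differentiate to get $\tfrac{\dif}{\dif t}\energy = \langle \tfrac{1}{2} u^2, \partial_t u \rangle = -\langle \tfrac{1}{2} u^2, f \rangle$. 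The crucial move is to insert the projection for free: since $f \in T_k$, we have $\langle \tfrac{1}{2} u^2, f \rangle = \langle P \tfrac{1}{2} u^2, f \rangle$. Then, unfolding the definition of $f$ and using self-adjointness of $(\I-\partial_x^2)^{-1}$ together with skew-adjointness of $\partial_x$, this inner product reduces to $\langle g, \partial_x g \rangle$ for $g = (\I-\partial_x^2)^{-1/2}\!$-type quantities, or more directly to an expression $\langle P\tfrac{u^2}{2}, \partial_x (\I-\partial_x^2)^{-1} P \tfrac{u^2}{2} \rangle$, which vanishes because $\partial_x$ composed with the self-adjoint operator $(\I-\partial_x^2)^{-1}$ is skew-adjoint, making the quadratic form zero.

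The momentum $\momentum = \int (\tfrac12 u^2 + \tfrac12 (u_x)^2)\dif x$ is the most delicate case and I expect it to be the main obstacle. Differentiating gives $\tfrac{\dif}{\dif t}\momentum = \langle u, \partial_t u \rangle + \langle u_x, \partial_t u_x \rangle = \langle u - \partial_x^2 \partial_t u \cdot(\ldots), \ldots\rangle$; more cleanly, after an integration by parts on the second term, $\tfrac{\dif}{\dif t}\momentum = \langle (\I - \partial_x^2) u, \partial_t u \rangle = -\langle (\I-\partial_x^2)u, f \rangle$. The point of the momentum invariant is that it is precisely matched to the operator appearing in the semidiscretization: substituting $f = (\I-\partial_x^2)^{-1}\partial_x P\tfrac{u^2}{2}$, the operators $(\I-\partial_x^2)$ and $(\I-\partial_x^2)^{-1}$ cancel, leaving $-\langle (\I-\partial_x^2)u, (\I-\partial_x^2)^{-1}\partial_x P\tfrac{u^2}{2}\rangle$. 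Moving $(\I-\partial_x^2)$ across via self-adjointness of its inverse, this becomes $-\langle u, \partial_x P \tfrac{u^2}{2}\rangle$, and since $u \in T_k$ I can again absorb the projection and integrate by parts to obtain $+\langle \partial_x u, \tfrac{u^2}{2}\rangle = \int u_x u^2 /2 \dif x = \int \partial_x(u^3/6)\dif x = 0$. The subtlety that requires care at every step is the bookkeeping of where the projection $P$ can legitimately be inserted or removed: it is essential that each inner product pairs a function already in $T_k$ against the projected quantity so that $\langle \cdot, P(\cdot)\rangle = \langle P(\cdot), \cdot\rangle$ closes the argument, and I would state this self-adjointness-of-$P$ lemma explicitly before the three computations.
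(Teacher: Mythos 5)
Your proposal is correct and follows essentially the same route as the paper's proof: differentiating each invariant, using self-adjointness of $(\I - \partial_x^2)^{-1}$, skew-adjointness of $(\I - \partial_x^2)^{-1}\partial_x$, and the fact that $\partial_t u, \partial_x u \in T_k$ to insert or remove the projection $P$ before closing with the chain rule (momentum) or the vanishing quadratic form of a skew-adjoint operator (energy). The only cosmetic difference is in the mass computation, where you argue via the vanishing constant Fourier mode of an exact derivative rather than via skew-symmetry and $\partial_x 1 = 0$, but these are trivially equivalent.
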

\begin{proof}
  The semidiscretization \eqref{eq:bbm_semi} conserves the total mass
  $\mass = \int u \dif x$,
  since
  \begin{equation}
    \partial_t \mass
    = \left\langle 1, \partial_t u \right\rangle
    = -\left\langle 1, (\I - \partial_x^2)^{-1} \partial_x P \frac{u^2}{2} \right\rangle
    = \left\langle (\I - \partial_x^2)^{-1} \partial_x 1, P \frac{u^2}{2} \right\rangle
    = 0,
  \end{equation}
  where we used the skew-symmetry of $(\I - \partial_x^2)^{-1} \partial_x$
  in the second-to-last step and $\partial_x 1 = 0$ in the last step.
  Similarly, it conserves the momentum
  $\momentum = \int (u^2 + u_x^2) / 2 \dif x = \int u (\I - \partial_x^2) u / 2 \dif x$,
  since
  \begin{equation}
  \begin{aligned}
    2 \partial_t \momentum
    = 2 \left\langle (\I - \partial_x^2) u, \partial_t u \right\rangle
    &= -\left\langle (\I - \partial_x^2) u, (\I - \partial_x^2)^{-1} \partial_x P u^2 \right\rangle
    \\
    &= \left\langle \partial_x u, P u^2 \right\rangle
    = \left\langle \partial_x u, u^2 \right\rangle
    = 0,
  \end{aligned}
  \end{equation}
  where we used that $\partial_x u \in T_k$ in the second-to-last step
  so that we can use the chain rule in the last step. Moreover, the total
  energy $\energy = \int u^3 / 6 \dif x$ is also conserved, since
  \begin{equation}
    4 \partial_t \energy
    = 2 \left\langle u^2, \partial_t u \right\rangle
    = -\left\langle u^2, (\I - \partial_x^2)^{-1} \partial_x P u^2 \right\rangle
    = -\left\langle P u^2, (\I - \partial_x^2)^{-1} \partial_x P u^2 \right\rangle
    = 0,
  \end{equation}
  where we used that the semidiscrete rate of change $\partial_t u \in T_k$
  in the second-to-last step and the skew-symmetry of
  $(\I - \partial_x^2)^{-1} \partial_x$ in the last step.
\end{proof}

\subsection{Korteweg-de Vries equation}

The Fourier Galerkin semidiscretization of the KdV equation \eqref{eq:kdv}
is given by
\begin{equation}
\label{eq:kdv_semi}
  \partial_t u = -\partial_x P \frac{u^2}{2} - \partial_x^3 u.
\end{equation}

\begin{theorem}[Maday and Quarteroni \cite{maday1988error}]
\label{thm:kdv_semi}
  The semidiscretization \eqref{eq:kdv_semi} of the KdV equation \eqref{eq:kdv}
  conserves the mass, momentum, and energy \eqref{eq:kdv_invariants}.
\end{theorem}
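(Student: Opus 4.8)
The plan is to mimic the structure of the BBM proof, checking each of the three invariants in turn by differentiating in time and substituting the semidiscretization \eqref{eq:kdv_semi}. The key algebraic facts I expect to use repeatedly are the skew-symmetry of the operators $\partial_x$ and $\partial_x^3$ on $T_k$ under the $L^2$ inner product (with periodic boundary conditions), the self-adjointness of $P$ together with $P^2 = P$, and the fact that $\partial_x u, \partial_x^3 u \in T_k$ whenever $u \in T_k$, which lets me drop or insert projections at convenient places.

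For the mass $\mass = \langle 1, u \rangle$, I would compute $\partial_t \mass = \langle 1, \partial_t u \rangle = -\langle 1, \partial_x P u^2/2 \rangle - \langle 1, \partial_x^3 u \rangle$ and observe that both terms vanish by moving $\partial_x$ (respectively $\partial_x^3$) onto the constant function $1$ via skew-symmetry, using $\partial_x 1 = 0$. For the momentum $\momentum = \tfrac12 \langle u, u \rangle$, I would write $\partial_t \momentum = \langle u, \partial_t u \rangle = -\langle u, \partial_x P u^2/2 \rangle - \langle u, \partial_x^3 u \rangle$. The dispersive term $\langle u, \partial_x^3 u \rangle$ vanishes by skew-symmetry of $\partial_x^3$. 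For the nonlinear term, since $u \in T_k$ I can use self-adjointness of $P$ to replace $\langle u, \partial_x P u^2/2 \rangle$ by $\langle P u, \partial_x u^2/2 \rangle = \langle u, \partial_x u^2/2 \rangle$ (after integrating by parts to move the derivative, or equivalently using skew-symmetry of $\partial_x$); this reduces to $-\tfrac12\langle \partial_x u, u^2 \rangle$ up to sign, which vanishes by the chain rule exactly as in the BBM momentum computation.

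The energy $\energy = \int (\tfrac12 u_x^2 - \tfrac16 u^3)\dif x$ is the main obstacle, because it mixes a quadratic gradient term with a cubic term, and the cancellation is a genuine cross-interaction between the nonlinear and dispersive parts of \eqref{eq:kdv_semi} rather than a termwise vanishing. I would compute the variational derivative of $\energy$, which is $-\partial_x^2 u - \tfrac12 u^2$, so that $\partial_t \energy = \langle -\partial_x^2 u - \tfrac12 u^2, \partial_t u \rangle$. Substituting $\partial_t u = -\partial_x P u^2/2 - \partial_x^3 u$ produces four pairings; the two purely-dispersive and two purely-nonlinear pairings vanish by skew-symmetry and the chain rule, and the crucial point is that the two cross terms $\langle \partial_x^2 u, \partial_x P u^2/2\rangle$ and $\langle \tfrac12 u^2, \partial_x^3 u\rangle$ must cancel each other. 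Here I must be careful with the projection: since $\partial_x^2 u \in T_k$, I can slide $P$ off $u^2/2$ using self-adjointness and $P\partial_x^2 u = \partial_x^2 u$, after which integration by parts (skew-symmetry of $\partial_x$) converts one cross term into the negative of the other. The delicate bookkeeping is ensuring every factor on which a derivative or projection acts genuinely lies in $T_k$ so that these manipulations are exact; this is precisely what the Fourier Galerkin framework guarantees and what distinguishes it from finite-element spaces whose derivatives leave the approximation space.
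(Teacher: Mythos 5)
Your proposal is correct and follows essentially the same route as the paper's proof: differentiate each invariant in time, substitute the semidiscretization, and use skew-symmetry of $\partial_x$ and $\partial_x^3$, self-adjointness of $P$ together with the fact that $\partial_x u, \partial_x^2 u, \partial_x^3 u \in T_k$, and the chain rule. In particular, your identification of the energy balance as a cancellation between the two cross terms $\left\langle \partial_x^2 u, \partial_x P \frac{u^2}{2} \right\rangle$ and $\left\langle \frac{u^2}{2}, \partial_x^3 u \right\rangle$, with the purely dispersive and purely nonlinear pairings vanishing separately, is exactly the structure of the paper's argument.
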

\begin{proof}
  The semidiscretization \eqref{eq:kdv_semi} conserves the total mass
  $\mass = \int u \dif x$, since
  \begin{equation}
    \partial_t \mass
    = \left\langle 1, \partial_t u \right\rangle
    = -\left\langle 1, \partial_x P \frac{u^2}{2} \right\rangle - \left\langle 1, \partial_x^3 u \right\rangle
    = \left\langle \partial_x 1, P \frac{u^2}{2} \right\rangle + \left\langle \partial_x^3 1, u \right\rangle
    = 0.
  \end{equation}
  Similarly, it conserves the momentum $\momentum = \int u^2 / 2 \dif x$, since
  \begin{equation}
    \partial_t \momentum
    = \left\langle u, \partial_t u \right\rangle
    = -\left\langle u, \partial_x P \frac{u^2}{2} \right\rangle - \left\langle u, \partial_x^3 u \right\rangle
    = \left\langle \partial_x u, \frac{u^2}{2} \right\rangle + \left\langle \partial_x^3 u, u \right\rangle
    = 0,
  \end{equation}
  where we used that $\partial_x u \in T_k$ in the second-to-last step
  so that we can use the chain rule in the last step. Moreover, the total
  energy
  $\energy = \int (u_x^2 / 2 - u^3 / 6) \dif x = -\int (u \partial_x^2 u / 2 + u^3 / 6) \dif x$
  is also conserved, since
  \begin{equation}
  \begin{aligned}
    \partial_t \energy
    &=
    - \left\langle \partial_x^2 u + \frac{u^2}{2}, \partial_t u \right\rangle
    \\
    &=
    \left\langle \partial_x^2 u, \partial_x P \frac{u^2}{2} \right\rangle
    + \left\langle \frac{u^2}{2}, \partial_x P \frac{u^2}{2} \right\rangle
    + \left\langle \partial_x^2 u, \partial_x^3 u \right\rangle
    + \left\langle \frac{u^2}{2}, \partial_x^3 u \right\rangle
    \\
    &=
    - \left\langle \partial_x^3 u, P \frac{u^2}{2} \right\rangle
    + \left\langle P \frac{u^2}{2}, \partial_x P \frac{u^2}{2} \right\rangle
    + 0
    + \left\langle P \frac{u^2}{2}, \partial_x^3 u \right\rangle
    =
    0,
  \end{aligned}
  \end{equation}
  where we used the skew-symmetry of $\partial_x$.
\end{proof}

\begin{remark}
  The Fourier Galerkin semidiscretization \eqref{eq:kdv_semi} of the
  KdV equation \eqref{eq:kdv} does not conserve the fourth invariant
  given by
  \begin{multline}
    \partial_t \biggl(
      \frac{1}{24} u^4
      - \frac{1}{2} u u_x^2
      + \frac{3}{10} u_{xx}^2
    \biggr)
    +
    \partial_x \biggl(
      \frac{1}{30} u^5
      + \frac{1}{6} u^3 u_{xx}
      - \frac{3}{4} u^2 u_x^2
      - u u_x u_{xxx}
    \\
      + \frac{4}{5} u u_{xx}^2
      + \frac{1}{2} u_x^2 u_{xx}
      + \frac{3}{5} u_{xx} u_{xxxx}
      - \frac{3}{10} u_{xxx}^2
    \biggr)
    = 0.
  \end{multline}
  We did not check whether other higher-order invariants are conserved.
\end{remark}

\subsection{Nonlinear Schr{\"o}dinger equation}

To formulate the semidiscretization of the NLS equation \eqref{eq:nls-u},
we rewrite it as a system for the real and imaginary parts $u = v + \i w$:
\begin{equation}
\label{eq:nls-vw}
\begin{aligned}
  v_t + w_{xx} + \beta \bigl( v^2 + w^2 \bigr) w &= 0,
  \\
  w_t - v_{xx} - \beta \bigl( v^2 + w^2 \bigr) v &= 0.
\end{aligned}
\end{equation}
The invariants \eqref{eq:nls-u_invariants} can be rewritten as
\begin{equation}
\label{eq:nls-vw_invariants}
\begin{gathered}
  \mass = \int \left( v^2 + w^2 \right) \dif x,
  \qquad
  \momentum = \int \left( v w_x - w v_x \right) \dif x = 2 \int v w_x \dif x,
  \\
  \energy = \int \left( v_x^2 + w_x^2 - \frac{\beta}{2} \left( v^2 + w^2 \right)^2 \right) \dif x.
\end{gathered}
\end{equation}
The Fourier Galerkin semidiscretization of \eqref{eq:nls-vw} is given by
\begin{equation}
\label{eq:nls-vw_semi}
\begin{aligned}
  \partial_t v &= -w_{xx} - \beta P \bigl( v^2 + w^2 \bigr) w,
  \\
  \partial_t w &=  v_{xx} + \beta P \bigl( v^2 + w^2 \bigr) v.
\end{aligned}
\end{equation}

\begin{theorem}
\label{thm:nls-vw_semi}
  The semidiscretization \eqref{eq:nls-vw_semi} of the NLS equation
  \eqref{eq:nls-vw} conserves the mass, momentum, and energy
  \eqref{eq:nls-vw_invariants}.
\end{theorem}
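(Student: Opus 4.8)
The plan is to follow the template established in the proofs of Theorems~\ref{thm:bbm_semi} and~\ref{thm:kdv_semi}: differentiate each invariant in~\eqref{eq:nls-vw_invariants}, insert the semidiscretization~\eqref{eq:nls-vw_semi}, and discharge the resulting inner products using three structural facts. These are the skew-symmetry of $\partial_x$ (hence the symmetry of $\partial_x^2$) under $\langle\cdot,\cdot\rangle$ with periodic boundaries; the self-adjointness and idempotency of the $L^2$ projection $P$ onto $T_k$; and the fact that $v,w\in T_k$, so that $\partial_x v,\partial_x w,\partial_t v,\partial_t w\in T_k$ as well, $Pv=v$, and $P\partial_x w=\partial_x w$. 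For the mass $\mass=\langle v,v\rangle+\langle w,w\rangle$, I would write $\partial_t\mass=2\langle v,\partial_t v\rangle+2\langle w,\partial_t w\rangle$; the linear part $-2\langle v,w_{xx}\rangle+2\langle w,v_{xx}\rangle$ vanishes by symmetry of $\partial_x^2$, and after moving $P$ onto the fixed factors $v,w$ the nonlinear part becomes $-2\beta\langle v,(v^2+w^2)w\rangle+2\beta\langle w,(v^2+w^2)v\rangle=0$.

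For the momentum I would use the form $\momentum=2\langle v,\partial_x w\rangle$ and, integrating $\partial_x$ by parts (valid since $v,\partial_t w\in T_k$), obtain $\partial_t\momentum=2\langle\partial_t v,\partial_x w\rangle-2\langle\partial_x v,\partial_t w\rangle$. Substituting~\eqref{eq:nls-vw_semi}, the linear contributions reduce to $-2\langle\partial_x w_x,w_x\rangle$ and $-2\langle v_x,\partial_x v_x\rangle$, each zero by skew-symmetry. After moving $P$ off the derivative factors $w_x,v_x\in T_k$, the nonlinear contributions combine into $-2\beta\int(v^2+w^2)(w w_x+v v_x)\dif x=-\tfrac{\beta}{2}\int\partial_x\bigl((v^2+w^2)^2\bigr)\dif x=0$.

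The energy is the substantive case, and it is here that the projection $P$ must be tracked carefully. Integrating by parts to write $\energy=-\langle v,v_{xx}\rangle-\langle w,w_{xx}\rangle-\tfrac{\beta}{2}\langle v^2+w^2,v^2+w^2\rangle$, differentiating, and collecting terms gives $\partial_t\energy=-2\langle v_{xx}+\beta(v^2+w^2)v,\partial_t v\rangle-2\langle w_{xx}+\beta(v^2+w^2)w,\partial_t w\rangle$. The bracketed quantities are the full, \emph{unprojected} variational derivatives of the energy, whereas~\eqref{eq:nls-vw_semi} furnishes $\partial_t w=v_{xx}+\beta P(v^2+w^2)v$ and $\partial_t v=-w_{xx}-\beta P(v^2+w^2)w$. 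I would therefore rewrite $v_{xx}+\beta(v^2+w^2)v=\partial_t w+\beta(\I-P)(v^2+w^2)v$ and $w_{xx}+\beta(v^2+w^2)w=-\partial_t v+\beta(\I-P)(v^2+w^2)w$; upon substitution the cross terms $\mp2\langle\partial_t w,\partial_t v\rangle$ cancel, leaving $\partial_t\energy=-2\beta\langle(\I-P)(v^2+w^2)v,\partial_t v\rangle-2\beta\langle(\I-P)(v^2+w^2)w,\partial_t w\rangle$.

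The main obstacle---really the crux on which everything hinges---is disposing of this last pair of inner products. The key observation is that $\I-P$ is the orthogonal projection onto the complement of $T_k$, so $(\I-P)f$ is orthogonal to every element of $T_k$; since $\partial_t v,\partial_t w\in T_k$, both inner products vanish and $\partial_t\energy=0$. This is exactly the mechanism already used for the KdV energy in Theorem~\ref{thm:kdv_semi}, where the residual $(\I-P)\tfrac{u^2}{2}$ is annihilated against the in-space rate of change; the only additional work for NLS is the bookkeeping required by the two coupled real components $v$ and $w$.
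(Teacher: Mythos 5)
Your proposal is correct and takes essentially the same approach as the paper's proof: the mass and momentum computations are identical, and for the energy you invoke exactly the same two facts ($\partial_t v, \partial_t w \in T_k$ and the self-adjointness/exactness of the $L^2$ projection $P$), merely organizing the bookkeeping differently --- the paper inserts $P$ into the test functions and cancels the two resulting symmetric inner products, while you equivalently split off the $(\I - P)$ residual, cancel the cross terms $\mp 2\langle \partial_t w, \partial_t v\rangle$, and annihilate the residual by orthogonality to $T_k$. There is no gap.
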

\begin{proof}
  The semidiscretization \eqref{eq:nls-vw_semi} conserves the total mass
  $\mass = \int (v^2 + w^2) \dif x$, since
  \begin{equation}
  \begin{aligned}
    \partial_t \mass
    &= 2 \left\langle v, \partial_t v \right\rangle + 2 \left\langle w, \partial_t w \right\rangle
    \\
    &= - 2 \left\langle v, w_{xx} + \beta P (v^2 + w^2) w \right\rangle
       + 2 \left\langle w, v_{xx} + \beta P (v^2 + w^2) v \right\rangle
    = 0,
  \end{aligned}
  \end{equation}
  where we used the symmetry of $\partial_x^2$ and exactness of the $L^2$
  projection $P$ in the last step.
  Similarly, the semidiscretization conserves the momentum
  $\momentum = 2 \int v w_x \dif x$, since
  \begin{equation}
  \begin{aligned}
    \partial_t \momentum
    &=   2 \left\langle w_x, \partial_t v \right\rangle
       - 2 \left\langle v_x, \partial_t w \right\rangle
    \\
    &= - 2 \left\langle w_x, w_{xx} + \beta P \bigl( v^2 + w^2 \bigr) w \right\rangle
       - 2 \left\langle v_x, v_{xx} + \beta P \bigl( v^2 + w^2 \bigr) v \right\rangle
    \\
    &= -2 \beta \int (v^2 + w^2) (w w_x + v v_x) \dif x
    = -\frac{1}{2} \beta \int \partial_x (v^2 + w^2)^2 \dif x
    = 0,
  \end{aligned}
  \end{equation}
  where we used the anti-symmetry of $\partial_x$ and exactness of the $L^2$
  projection $P$.
  Finally, the total energy
  $\energy = \int (v_x^2 + w_x^2 - \beta (v^2 + w^2)^2 / 2) \dif x = -\int (v \partial_x^2 v + w \partial_x^2 w + \beta (v^2 + w^2)^2 / 2) \dif x$
  is also conserved, since
  \begin{equation}
  \begin{aligned}
    \partial_t \energy
    &= - 2 \left\langle v_{xx} + \beta (v^2 + w^2) v, \partial_t v \right\rangle
       - 2 \left\langle w_{xx} + \beta (v^2 + w^2) w, \partial_t w \right\rangle
    \\
    &=   2 \left\langle v_{xx} + \beta P (v^2 + w^2) v, w_{xx} + \beta P \bigl( v^2 + w^2 \bigr) w \right\rangle
    \\&\quad
       - 2 \left\langle w_{xx} + \beta P (v^2 + w^2) w, v_{xx} + \beta P \bigl( v^2 + w^2 \bigr) v \right\rangle
    = 0,
  \end{aligned}
  \end{equation}
  where we used again $\partial_t v, \partial_t w \in T_k$ and the exactness
  of the $L^2$ projection $P$.
\end{proof}

\subsection{Implementation notes}

Since we work with real-valued functions, we use the forward/backward
real FFT (in FFTW \cite{frigo2005design})
to map between spatial values and modal coefficients. While it is often
convenient to use the spatial coordinates as the primary representation
(which can be visualized directly), we cannot do so if an even number of nodes
is used; in this case, the backward real FFT drops the imaginary part of the
highest mode (Nyquist frequency) to ensure that the output is real-valued.
This loss of information cannot be recovered later and destroys the structures
we used to prove conservation. Thus, we use the modal coefficients as primary
variables and compute the spatial derivatives by multiplying all modes by
the appropriate (powers of the) imaginary unit and wave number. Please note
that this differs from the common practice to set the Nyquist frequency of
odd-derivative operators to zero for an even number of nodes
\cite{johnson2011notes}. However, it is required to obtain the desired
conservation of the mass, momentum, and energy for an even number of nodes.

To compute the exact $L^2$ projection $P$, we use a classical de-aliasing
strategy. Assume we have a polynomial nonlinearity of degree $p$ and
$N$ spatial nodes to represent $u$. First, we use the modal coefficients
of $u$ and extend them by zero to obtain the modal coefficients of the
representation of $u$ with $M$ nodes. Then, we compute the nodal values on
these $M$ nodes, compute the nonlinearity in physical space,
compute the modal coefficients of the result on $M$ nodes, and truncate them
back to the modal coefficients corresponding to $N$ nodes. This ensures that
we obtain the exact $L^2$ projection of the degree $p$ nonlinearity if
$M > (p + 1) N / 2$,
see, e.g., \cite[Section~4.3.2]{kopriva2009implementing} for a detailed
description of the case $p = 2$ and
\cite{jones1996pseudo,derevyanko2008n_plus_1_over_2} for general $p$.
For quadratic nonlinearities with $p = 2$, this is the well-known
$3/2$-rule \cite{orszag1971elimination}.

To improve the efficiency, we compute a minimal number $M_{\mathrm{min}}$ of
nodes to achieve de-aliasing based on the rules above. Then, we choose
$M \ge M_{\mathrm{min}}$ as the smallest integer
with prime factors in $\{2, 3, 5, 7\}$.
To further improve the performance of the implementations, we use several larger
grids (with $M > N$ nodes) to compute the nonlinearities in physical space, e.g.,
based on a factor $3/2$ for the quadratic nonlinearities $u u_x$ in the BBM
and KdV equations and a factor $2$ for the cubic nonlinearities in the energies
of these equations. We initialize the numerical solution by sampling the
initial condition at $N$ nodes and computing the corresponding modal
coefficients.

We implemented all methods in Julia \cite{bezanson2017julia}.
We use FFTW.jl \cite{frigo2005design} wrapped in SummationByPartsOperators.jl
\cite{ranocha2021sbp} for the Fourier Galerkin methods. The visualizations
are created using Makie.jl \cite{danisch2021makie}.
All code and data required to reproduce the numerical results are available
online in our reproducibility repository \cite{ranocha2025conservingRepro}.

\subsection{Numerical verification of the semidiscrete invariant conservation}
\label{sec:semidiscrete_conservation}

Next, we verify the conservation of the mass, momentum, and energy
for the semidiscretizations of the BBM, KdV, and NLS equations numerically.
While explicit solitary wave (soliton) solutions are available for all three
equations, a single solitary wave is often not challenging enough to demonstrate
conservation of multiple invariants. Thus, we use setups with two interacting
waves for each equation.

The BBM equation \eqref{eq:bbm} has solitary wave solutions
\begin{equation}
\label{eq:bbm_solitary_wave}
  u_{x_0, c}(t, x) = 1 + A \cosh\bigl(k (x - x_0 - c t) \bigr)^{-2},
  \qquad
  A = 3 (c - 1),
  \quad
  k = \frac{1}{2} \sqrt{1 - 1 / c}.
\end{equation}
We initialize the numerical solution as
\begin{equation}
  u(0, x) = u_{-20, 1.3}(0, x) + u_{20, 1.2}(0, x) - 1
\end{equation}
in the domain $[-100, 100]$ with periodic boundary conditions and choose a time span
of $[0, 400]$ to ensure that the two waves interact with each other.

The KdV equation \eqref{eq:kdv} has the two-soliton solution
\cite{hirota1971exact,hietarinta2007introduction}
\begin{equation}
  u(t, x) = 12 \partial_x^2 \log F,
  \qquad
  F = 1 + \e^{\eta_1} + \e^{\eta_2} + a_{12} \e^{\eta_1 + \eta_2},
  \quad
  \eta_i = k_i (x - x_{i, 0}) - k_i^3 t.
\end{equation}
We choose the parameters
\begin{equation}
  k_1 = 0.75, \quad k_2 = 0.5,
  \qquad
  x_{1, 0} = -50, \quad x_{2, 0} = 50,
\end{equation}
and the spatial domain $[-200, 200]$ with periodic boundary conditions.
The time span $[0, 350]$ ensures that the two waves interact. We use
Enzyme.jl \cite{moses2020instead,moses2021reverse} to compute the
second-derivative via automatic/algorithmic differentiation (AD).

For the NLS equation \eqref{eq:nls-u}, we use the same two-soliton solution
as in \cite{biswas2024accurate,ranocha2025high}; we choose the spatial domain
$[-35, 35]$ with periodic boundary conditions and the time span $[0, 10]$.

\begin{figure}[!htb]
\centering
  \includegraphics[width=\textwidth]{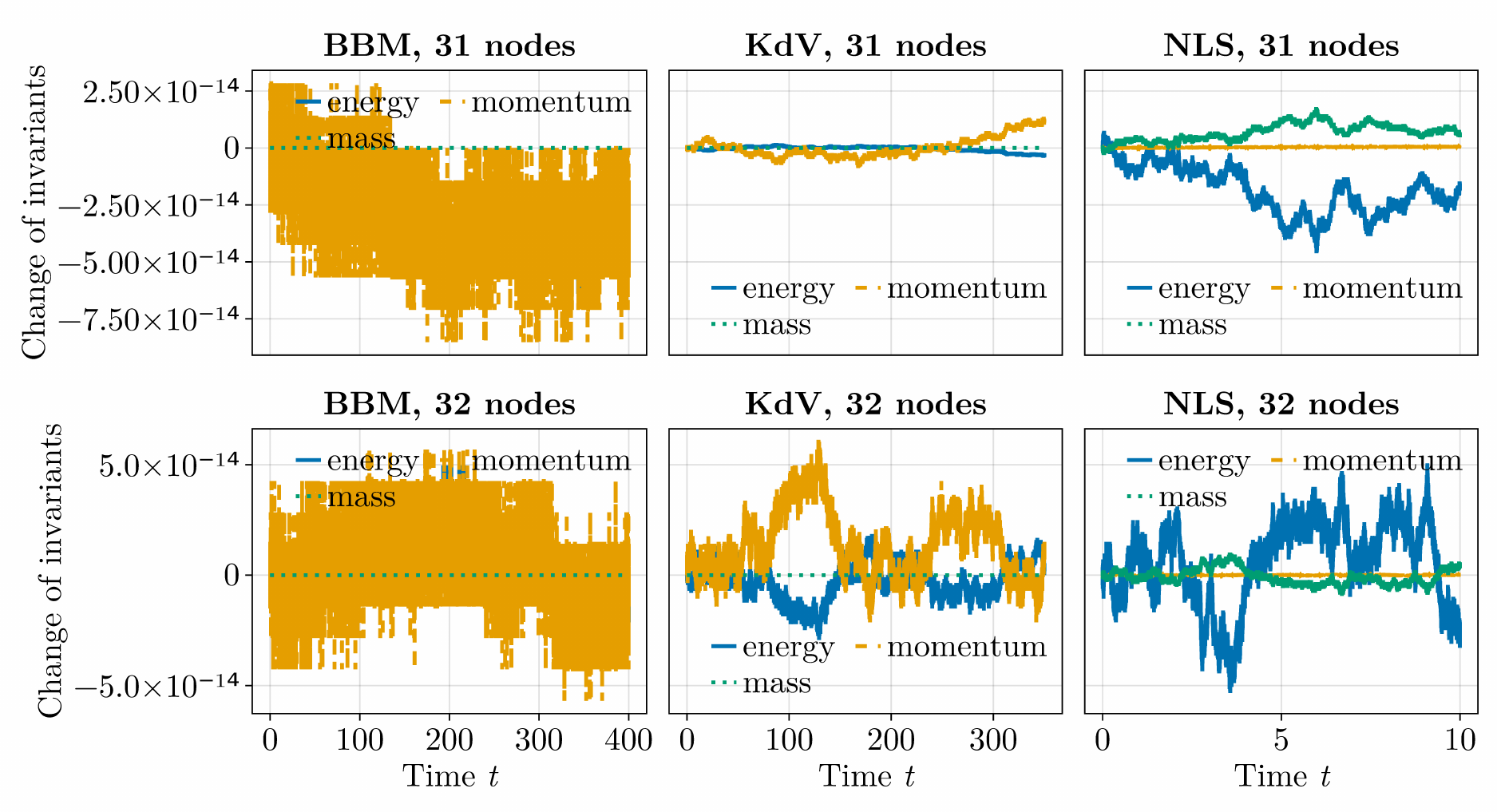}
  \caption{Change of invariants over time for the Fourier Galerkin semidiscretizations
           of the BBM, KdV, and NLS equations with two interacting solitary waves.
           The time integration is performed with the fifth-order method of
           \cite{kennedy2019higher} with $\Delta t = \num{5e-3}$ for the BBM,
           $\Delta t = \num{1e-2}$ for the KdV, and
           $\Delta t = \num{1e-4}$ for the NLS equation.}
  \label{fig:semidiscrete_conservation}
\end{figure}

For all three equations, we use coarse meshes with $N \in \{31, 32\}$ nodes
and sufficiently small time step sizes to ensure that the errors in time are
negligible. The results are shown in Figure~\ref{fig:semidiscrete_conservation}.
As expected, the Fourier Galerkin semidiscretizations conserve all three invariants
up to the precision of the time integration method (which is close to machine
accuracy due to the choice of sufficiently small time step sizes $\Delta t$).

\subsection{Numerical verification of the convergence in space}
\label{sec:convergence_in_space}

To verify the convergence of the Fourier Galerkin semidiscretizations in space, we use the same setups as in Section~\ref{sec:semidiscrete_conservation}.
The only difference is the BBM equation, where we choose a strictly negative solitary wave with celerity $c = -5$ in \eqref{eq:bbm_solitary_wave} in the periodic domain $[-50, 50]$ to obtain a large-amplitude solution that is numerically more challenging to approximate.
We use sufficiently small time steps with the fifth-order method of \cite{kennedy2019higher} to ensure that the errors in time are negligible.
We compute the relative $L^2$ errors at the final time (i.e., the discrete $L^2$ error divided by the discrete $L^2$ norm of the exact solution).
The results shown in Figure~\ref{fig:convergence_in_space} demonstrate the expected exponential convergence of the Fourier Galerkin semidiscretizations in space for all three equations.

We also considered negative celerities closer to zero, which can influence the orbital stability \cite{nguyen2009stability}. The results were similar to the ones for $c = -5$; in particular, we obtained the same convergence behavior. Thus, we do not show them here in detail.

\begin{figure}[!htb]
\centering
  \includegraphics[width=\textwidth]{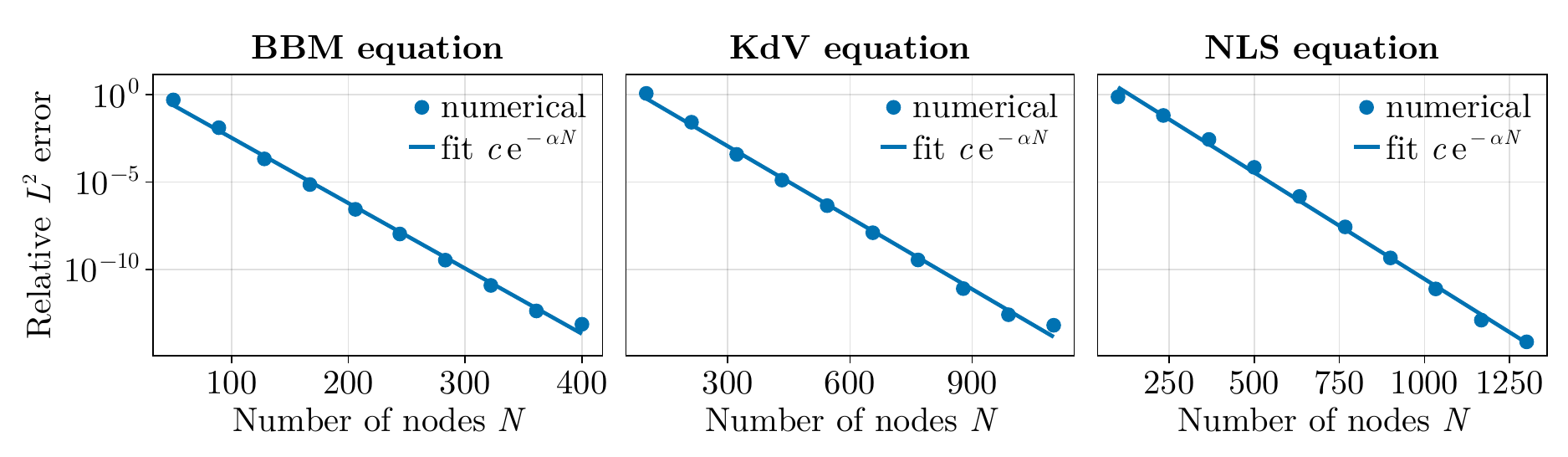}
  \caption{Relative $L^2$ errors at the final time for the Fourier Galerkin semidiscretizations
           of the BBM, KdV, and NLS equations.
           The time integration is performed with the fifth-order method of
           \cite{kennedy2019higher} with
           $2^{14}$ time steps in $[0, 10]$ for BBM,
           $2^{15}$ time steps in $[0, 100]$ for KdV, and
           $2^{14}$ time steps in $[0, 1]$ for NLS.}
  \label{fig:convergence_in_space}
\end{figure}

\section{Time discretizations}
\label{sec:time_discretizations}

General linear methods like Runge-Kutta methods and
linear multistep methods typically conserve only the linear invariants,
e.g., the total mass $\mass$ of the BBM and KdV equations. While there
are some special combinations of methods and problems where simple explicit
schemes conserve a nonlinear invariant \cite[Section~5]{ranocha2020energy},
this cannot be expected in general. Quadratic invariants like the momentum
$\momentum$ for the BBM, KdV, and NLS equations are conserved by symplectic
Runge-Kutta methods, which are necessarily fully implicit
\cite{hairer2006geometric}.

We want to avoid fully implicit methods due to the high computational costs
of solving large nonlinear systems. Thus, we use IMEX
additive Runge-Kutta (ARK) methods \cite{ascher1997implicit,kennedy2019higher},
which treat the linear stiff terms of the KdV and NLS equations implicitly
and all nonlinear terms explicitly (for the BBM equation, we only use the
explicit parts of the ARK methods). Since we use Fourier methods in space,
the resulting linear systems can be solved efficiently in modal space.

To enforce conservation of all invariants, we use the quadratic-preserving
relaxation method from \cite{ranocha2025high}. It combines an orthogonal
projection (see, e.g., \cite{calvo2010projection} or \cite[Section~IV.4]{hairer2006geometric}) with
relaxation (see, e.g., \cite{ranocha2020general,ranocha2020relaxation}).
For an ODE
\begin{equation}
\label{eq:ode}
  u'(t) = f\bigl( u(t) \bigr)
\end{equation}
with invariant (first integral) $\eta$ satisfying
$\forall u\colon \eta'(u) f(u) = 0$,
the quadratic-preserving relaxation method \cite{ranocha2025high} performs
the following steps:
\begin{itemize}
  \item Given $u^n \approx u(t^n)$, compute a provisional value
        $\widetilde{u}^{n+1} \approx u(\widetilde{t}^{n+1})$
        using a baseline time integration method, e.g., an ARK method.
  \item Project the baseline result onto the manifold defined by a quadratic
        invariant using the projection operator $\pi$, i.e., compute
        $\widehat{u}^{n+1} = \pi\bigl( \widetilde{u}^{n+1} \bigr)$.
  \item Search for a solution conserving the additional invariant $\eta$
        along the (approximate) geodesic line connecting $u^{n}$ and
        $\widehat{u}^{n+1}$, i.e., solve the scalar equation
        \begin{subequations} \label{qpr}
        \begin{equation} \label{gamma}
          \eta\Bigl( \pi \bigl( u^n + \gamma \bigl( \hat{u}^{n+1} - u^n \bigr) \bigr) \Bigr)
          =
          \eta(u^n)
        \end{equation}
        for the scalar relaxation parameter $\gamma$.
  \item Continue the numerical time integration with
        \begin{equation}
          u^{n+1} = \pi \bigl( u^n + \gamma \bigl( \hat{u}^{n+1} - u^n \bigr) \bigr)
          \approx u\bigl( t^{n+1} \bigr),
          \qquad
          t^{n+1} = t^n + \gamma \Delta t,
        \end{equation}
        \end{subequations}
        instead of $\tilde{u}^{n+1}$ and $\tilde{t}^{n+1}$.
\end{itemize}
For the nonlinear Schr{\"o}dinger equation considered in \cite{ranocha2025high},
the projection operator conserving the mass from one step to the next is given by
\begin{equation}
  \pi\bigl( \tilde{u}^{n+1} \bigr)
  =
  \sqrt{\frac{\mass\bigl( u^{n} \bigr)}
             {\mass\bigl( \tilde{u}^{n+1} \bigr)}} \tilde{u}^{n+1}.
\end{equation}
By construction, the invariant enforced by $\pi$ and the invariant $\eta$
are conserved. In \cite{ranocha2025high}, $\eta$ is chosen as the energy
$\energy$ of the NLS equation.

We generalize this approach by choosing the projection operator $\pi$
to conserve both the mass $\mass$ and the momentum $\momentum$. This allows us
to construct relaxation methods conserving all three invariants of the
BBM, KdV, and NLS equations by choosing $\eta$ as the energy $\energy$.
From \cite{ranocha2025high}, we have
\begin{theorem}
\label{thm:geodesic_relaxation}
  Assume that the ODE \eqref{eq:ode} has the invariants $(\mass, \momentum,
  \energy)$ and that $\pi$ is a projection operator onto the manifold defined
  by the two constraints $\mass(u)=\mass(u^n)$ and $\momentum(u)=\momentum(u^n)$.
  Let $\eta=\energy$ and assume that the baseline one-step method is of order
  $p \ge 2$ and that
  \begin{equation}
  \label{eq:nondegeneracy_condition}
    \eta'\bigl( u^n \bigr) \pi'\bigl( u^n \bigr) f'\bigl( u^n \bigr) f\bigl( u^n \bigr)
    \ne 0.
  \end{equation}
  Then, the generalized quadratic-preserving relaxation method \eqref{qpr} is well-defined
  for sufficiently small time step sizes $\Delta t$; there is a
  unique solution of \eqref{gamma} with $\gamma = 1 + \mathcal{O}(\Delta t^{p-1})$ and
  the resulting order of accuracy is at least $p$ (when measuring the
  error at the relaxed time $t^{n+1} = t^n + \gamma \Delta t$).
  Moreover, all three invariants are conserved.
\end{theorem}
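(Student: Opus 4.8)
The plan is to reduce the entire statement to a scalar root-finding problem and treat it with the implicit function theorem, following the standard template for relaxation methods. Define the scalar residual
\begin{equation}
  r(\gamma) = \eta\Bigl( \pi\bigl( u^n + \gamma (\widehat{u}^{n+1} - u^n) \bigr) \Bigr) - \eta(u^n),
\end{equation}
so that \eqref{gamma} reads $r(\gamma) = 0$, and $\gamma$ is well-defined exactly when $r$ has a suitable root near $1$. Since $u^n$ already lies on the manifold cut out by $\mass(\cdot) = \mass(u^n)$ and $\momentum(\cdot) = \momentum(u^n)$, we have $\pi(u^n) = u^n$ and hence $r(0) = 0$; this is a spurious root that must be factored out. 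Writing $d = \widehat{u}^{n+1} - u^n$, the first step is to record the two basic estimates $d = \Delta t\, f(u^n) + \O(\Delta t^2)$ (from consistency of the baseline method) and $r(1) = \eta(\widehat{u}^{n+1}) - \eta(u^n) = \O(\Delta t^{p+1})$. The latter is the per-step energy defect: the exact flow conserves $\energy$, the baseline method has order $p$, and because the exact solution stays on the mass/momentum manifold the projection $\pi$ displaces $\widetilde{u}^{n+1}$ by only $\O(\Delta t^{p+1})$, so no order is lost.

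The second step extracts the nontrivial root. Because $\eta'(u^n) f(u^n) = 0$ (invariance) and $\pi'(u^n) f(u^n) = f(u^n)$ (the flow is tangent to the manifold and hence fixed by the tangential projection), the would-be $\O(\Delta t)$ terms in $r'$ vanish, and $r$ is to leading order a quadratic in $\gamma$ with coefficients of size $\Delta t^2$. Factoring out the spurious root as $r(\gamma) = \gamma\, q(\gamma)$ and combining with $r(1) = \O(\Delta t^{p+1})$ forces the leading quadratic to behave like $\Delta t^2 B\,(\gamma^2-\gamma)$, so that $r'(1) = \tfrac12 \Delta t^2 B + \O(\Delta t^3)$ with
\begin{equation}
  B = \eta''(u^n)\bigl( f(u^n), f(u^n) \bigr) + \eta'(u^n)\, \pi''(u^n)\bigl( f(u^n), f(u^n) \bigr).
\end{equation}
I would then apply the implicit function theorem to the normalized residual $\gamma \mapsto r(\gamma)/\Delta t^2$, viewed as a function of $\gamma$ with parameter $\Delta t$: its value at $\gamma = 1$ is $\O(\Delta t^{p-1})$ while its $\gamma$-derivative tends to $\tfrac12 B \ne 0$, which yields a unique root $\gamma = 1 - r(1)/r'(1) + \dots = 1 + \O(\Delta t^{p-1})$ for all sufficiently small $\Delta t$.

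The main obstacle is to show that this effective curvature $B$ is nonzero precisely under the stated hypothesis \eqref{eq:nondegeneracy_condition}. Differentiating the invariance identity $\eta'(u) f(u) = 0$ along $f$ gives $\eta''(u^n)(f,f) = -\eta'(u^n) f'(u^n) f(u^n)$, which disposes of the first term in $B$; the real difficulty is the second term, containing the second derivative $\pi''$ of the projection. One cannot circumvent it, since two curves in the manifold through $u^n$ with the same velocity $f(u^n)$ but different accelerations produce different values of $\tfrac{\dif^2}{\dif\gamma^2}\eta$, and the difference of accelerations need not lie in $\ker \eta'(u^n)$. Using the defining relations of the (orthogonal) projection together with $\mass'(u^n) f(u^n) = \momentum'(u^n) f(u^n) = 0$, I expect to reduce $B$ to $-\,\eta'(u^n)\, \pi'(u^n)\, f'(u^n)\, f(u^n)$, so that \eqref{eq:nondegeneracy_condition} is exactly the statement $B \ne 0$; this identification is the technical heart of the argument (a sanity check is the unprojected limit $\pi' = I$, $\pi'' = 0$, where $B = \eta''(f,f) = -\eta' f' f$ recovers the classical relaxation condition).

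Finally, the order and conservation claims follow quickly. Conservation of $\mass$ and $\momentum$ is exact by construction of $\pi$, and conservation of $\energy$ is exactly the relaxation equation $r(\gamma) = 0$. For the order I would use the standard relaxation cancellation: writing the local error at the relaxed time $t^{n+1} = t^n + \gamma \Delta t$ as the sum of the baseline error $\widehat{u}^{n+1} - u(t^n+\Delta t) = \O(\Delta t^{p+1})$, the relaxation perturbation $u^{n+1} - \widehat{u}^{n+1} = (\gamma-1)\Delta t\, f(u^n) + \O(\Delta t^{p+1})$, and the time shift $u(t^n+\Delta t) - u(t^{n+1}) = -(\gamma-1)\Delta t\, f(u^n) + \O(\Delta t^{p+1})$, the two $(\gamma-1)\Delta t\, f(u^n)$ contributions telescope and the local error is $\O(\Delta t^{p+1})$, so order $p$ is retained when the error is measured at $t^{n+1}$.
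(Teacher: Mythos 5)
The first thing to note is that the paper does not actually prove this theorem: it is imported verbatim from \cite{ranocha2025high} (``From \cite{ranocha2025high}, we have \dots''), so there is no in-paper proof to compare against. Measured against the standard relaxation argument that the cited work follows, your proposal is sound and takes the same implicit-function-theorem route: the residual $r$, the facts $r(0)=0$, $\pi'(u^n)f(u^n)=f(u^n)$, $r(1)=\O(\Delta t^{p+1})$ (using idempotence of $\pi$ so that $r(1)=\eta(\widehat{u}^{n+1})-\eta(u^n)$), the IFT applied to $r/\Delta t^2$, the conservation claims, and the telescoping local-error argument are all correct.

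The one step you flag as ``the technical heart'' and leave as an expectation --- the identification $B = -\eta'(u^n)\pi'(u^n)f'(u^n)f(u^n)$ --- is indeed true, and it closes in two lines; orthogonality of $\pi$ is not needed, only smoothness and the fact that $\pi$ restricts to the identity on the constraint manifold. Since the exact flow through $u^n$ stays on that manifold, $\pi(u(t))=u(t)$ for all $t$; differentiating twice at $t=t^n$ gives
\begin{equation}
  \pi''(u^n)\bigl(f(u^n),f(u^n)\bigr) + \pi'(u^n)f'(u^n)f(u^n) = f'(u^n)f(u^n),
\end{equation}
hence $\eta'(u^n)\,\pi''(u^n)\bigl(f(u^n),f(u^n)\bigr) = \eta' f' f - \eta'\pi' f' f$ (all evaluated at $u^n$). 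Adding your identity $\eta''(u^n)\bigl(f(u^n),f(u^n)\bigr) = -\eta' f' f$, obtained by differentiating $\eta'(u)f(u)=0$ along the flow, yields $B = -\eta'\pi' f' f$, so \eqref{eq:nondegeneracy_condition} is exactly the statement $B\neq 0$ and your IFT step goes through. Two minor points: the leading quadratic should read $\tfrac12\Delta t^2 B(\gamma^2-\gamma)$, the factor $\tfrac12$ being needed for consistency with your own (correct) claim $r'(1)=\tfrac12\Delta t^2 B+\O(\Delta t^3)$; and in the error decomposition the quadratic remainder of $\pi$ contributes $\O\bigl((\gamma-1)^2\norm{d}^2\bigr)=\O(\Delta t^{2p})=\O(\Delta t^{p+2})$, which is harmless precisely because $p\ge 2$ --- worth stating explicitly, since it is where the hypothesis on $p$ enters the order argument.
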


The non-degeneracy condition \eqref{eq:nondegeneracy_condition} is a
generalization of similar conditions for standard relaxation methods
and is discussed further in \cite{ranocha2025high}. For example, it ensures
that $u^n$ is not a steady state of the ODE.

In practice, we solve the scalar nonlinear equation for $\gamma$ using
the method of \cite{klement2014using} implemented in
SimpleNonlinearSolve.jl \cite{pal2026nonlinearsolve}.

\subsection{Implementation of the projection operators}

For the BBM and KdV equations, the mass $\mass$ is a linear invariant, while
the momentum $\momentum$ is a quadratic invariant and induces a
norm. Using this norm to measure distances, the orthogonal projection
onto the manifold defined by constant mass and momentum is given by
\begin{equation}
  \pi(u^{n+1}) = \overline{u} + \sqrt{\frac{\momentum(u^n) - \momentum(\overline{u})}{\momentum(u^{n+1} - \overline{u})}} (u^{n+1} - \overline{u}),
\end{equation}
where
\begin{equation}
  \overline{u} = \frac{\mass(u^n)}{\mass(1)} = \frac{\mass(u^{n+1})}{\mass(1)}
\end{equation}
is the mean value of $u^{n}$ (and $u^{n+1}$, since the baseline methods
conserve the total mass due to its linearity).

For the NLS equation, both the total mass $\mass$ and the total momentum
$\momentum$ are quadratic invariants. Since even the projection onto an
ellipsoid is not completely straightforward (since it involves solving a
quartic equation, for which no simple closed-form solution exists), we use
a simplified projection method using the gradient of the momentum at the
current solution $u^n = v^n + \i w^n$. This leads to the ansatz
\begin{equation}
  \pi
  \begin{pmatrix}
    v \\
    w
  \end{pmatrix}
  =
  \lambda \begin{pmatrix}
    v \\
    w
  \end{pmatrix}
  +
  \mu \begin{pmatrix}
    w_x \\
    - v_x
  \end{pmatrix}
\end{equation}
for Lagrange multipliers $\lambda, \mu \in \R$ such that
\begin{equation}
  \mass\bigl( \pi(u) \bigr) = \mass\bigl( u^{n} \bigr)
  \qquad\text{and}\qquad
  \momentum\bigl( \pi(u) \bigr) = \momentum\bigl( u^{n} \bigr).
\end{equation}
The condition on the total mass can be written as
\begin{equation}
\label{eq:nls_mass_condition}
\begin{aligned}
  \mass(u^n)
  &=
  \mass\bigl( \pi(u) \bigr)
  =
  \int \left( (\lambda v + \mu w_x)^2 + (\lambda w - \mu v_x)^2 \right)
  \\
  &=
  \lambda^2 \int (v^2 + w^2)
  + 2 \lambda \mu \int (v w_x - w v_x)
  + \mu^2 \int (v_x^2 + w_x^2)
  \\
  &=
  \lambda^2 \mass(v, w)
  + 2 \lambda \mu \momentum(v, w)
  + \mu^2 \mass(v_x, w_x).
\end{aligned}
\end{equation}
The condition on the total momentum can be written as
\begin{equation}
\label{eq:nls_momentum_condition}
\begin{aligned}
  \momentum(u^n)
  &=
  \momentum\bigl( \pi(u) \bigr)
  =
  2 \int (\lambda v + \mu w_x) (\lambda w_x - \mu v_{xx})
  \\
  &=
  2 \lambda^2 \int v w_x
  + 2 \lambda \mu \int (v_x^2 + w_x^2)
  + 2 \mu^2 \int v_x w_{xx}
  \\
  &=
  \lambda^2 \momentum(v, w)
  + 2 \lambda \mu \mass(v_x, w_x)
  + \mu^2 \momentum(v_x, w_x).
\end{aligned}
\end{equation}
For given $(v, w)$, the conditions \eqref{eq:nls_mass_condition}
and \eqref{eq:nls_momentum_condition} form a system of two quadratic equations
for the two unknowns $\lambda$ and $\mu$. We solve this system using
Newton's method implemented in
SimpleNonlinearSolve.jl \cite{pal2026nonlinearsolve} with initial guess
$\lambda = 1$ and $\mu = 0$.

\subsection{Numerical verification of the fully-discrete invariant conservation}
\label{sec:fully_discrete_conservation}

We verify the conservation of the mass, momentum, and energy for the fully
discrete schemes using the same two-wave setups as in
Section~\ref{sec:semidiscrete_conservation}.

\begin{figure}[!htb]
\centering
  \includegraphics[width=\textwidth]{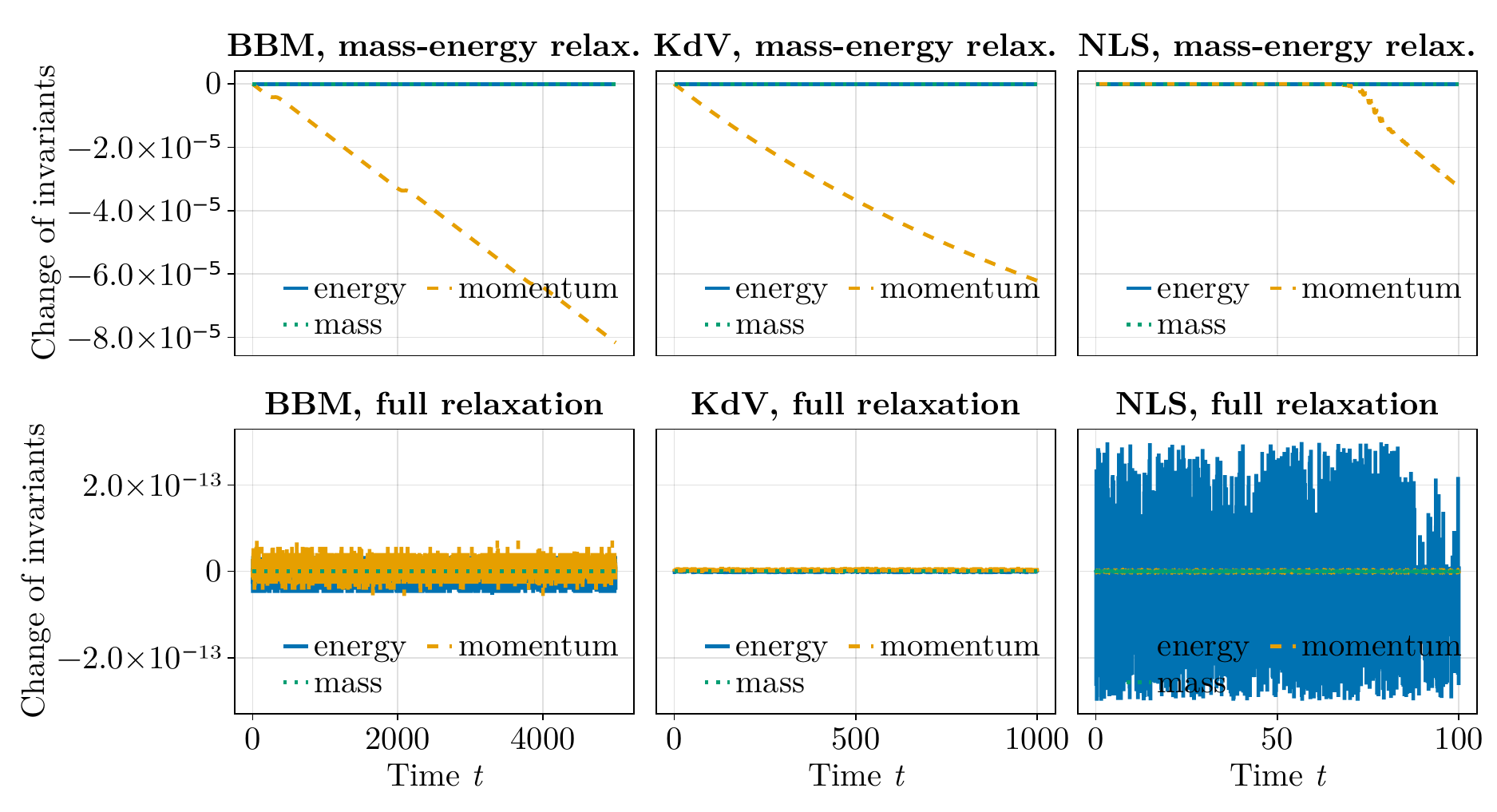}
  \caption{Change of invariants over time for the Fourier Galerkin
           semidiscretizations of the BBM, KdV, and NLS equations with two
           interacting solitary waves for two versions of relaxation.
           The time integration is performed with the fifth-order method of
           \cite{kennedy2019higher} with $\Delta t = 0.5$ for the BBM,
           $\Delta t = 0.1$ for the KdV, and
           $\Delta t = 0.01$ for the NLS equation.}
  \label{fig:fully_discrete_conservation_two_waves}
\end{figure}

We choose $N = 2^8$ nodes in space and use the fifth-order ARK method of
\cite{kennedy2019higher} as the baseline time integration method. Compared to
Section~\ref{sec:semidiscrete_conservation}, we choose bigger time step sizes
$\Delta t$ such that the error in time is not negligible anymore. We choose
final times such that the baseline method performs $10^4$ time steps for each
equation.

The results shown in Figure~\ref{fig:fully_discrete_conservation_two_waves}
confirm the theoretical predictions. In particular, relaxation methods
constructed to conserve the total mass and energy (only) conserve these
invariants but not the total momentum\footnote{For the BBM and KdV equations,
these are the standard relaxation methods that conserve all linear invariants
automatically; for the NLS equation, this is the quadratic-preserving
relaxation method of \cite{ranocha2025high} without the modification to
conserve the momentum.}.
The new quadratic-preserving relaxation methods using the projection operators
to conserve the total mass and momentum conserve all three invariants up to
machine precision.

\begin{figure}[!htb]
\centering
  \includegraphics[width=\textwidth]{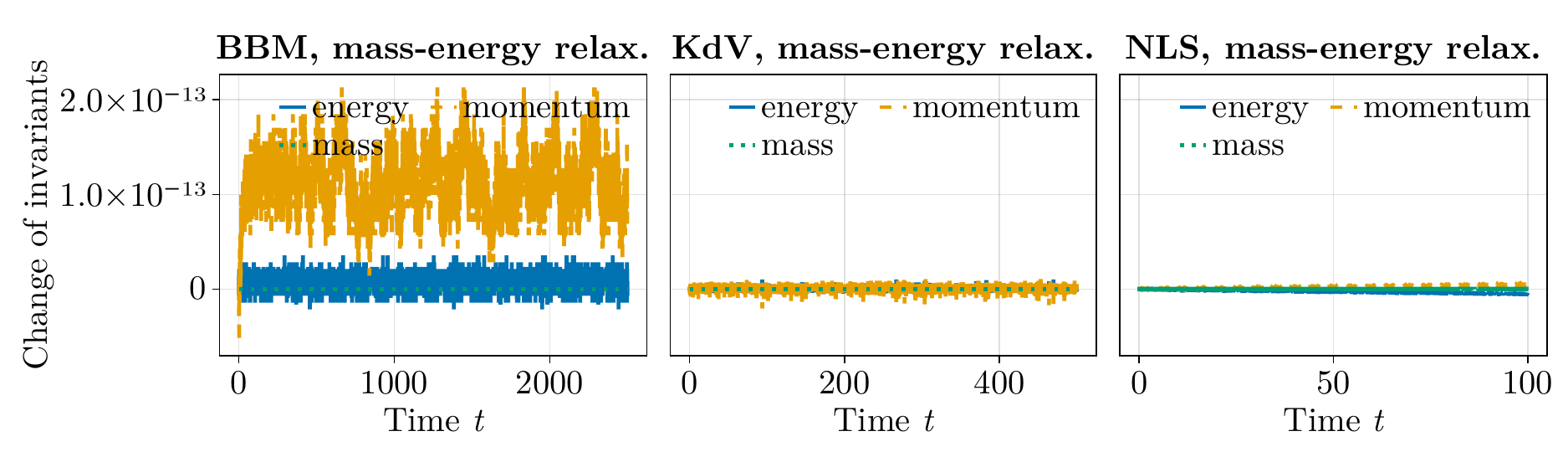}
  \caption{Change of invariants over time for the Fourier Galerkin
           semidiscretizations of the BBM, KdV, and NLS equations with one
           solitary wave with relaxation to enforce conservation of the
           total mass and energy.
           The time integration is performed with the fourth-order method of
           \cite{kennedy2019higher} with $\Delta t = 0.25$ for the BBM,
           $\Delta t = 0.05$ for the KdV, and
           $\Delta t = 0.01$ for the NLS equation.}
  \label{fig:fully_discrete_conservation_one_wave}
\end{figure}

For a single solitary wave, conserving the total mass and energy typically
leads to very good results. In particular, it results in a linear error growth
in time (if the spatial error is negligible) instead of a quadratic error
growth for general time integration methods
\cite{frutos1997accuracy,duran2000numerical,araujo2001error}.
Here, we choose the solitary wave with the larger wave speed from the two-wave
setup for the BBM and KdV equations as well as the single-soliton solution
used in \cite{biswas2024accurate,ranocha2025high} for the NLS equation.

The results shown in Figure~\ref{fig:fully_discrete_conservation_one_wave}
demonstrate that even the mass- and energy-conserving relaxation methods
lead to conservation of the momentum up to small oscillations close to
machine accuracy. Please note that this is a special property when integrating
a single solitary wave and does not hold in more general cases such as the
two-wave setups considered before.

\subsection{Numerical verification of the convergence in time}
\label{sec:convergence_in_time}

To verify the convergence of the time integration methods, we use the same setups as in Section~\ref{sec:convergence_in_time}.
The relative $L^2$ errors at the final time shown in Figure~\ref{fig:convergence_in_time} demonstrate the expected convergence rates of the time integration methods for all three equations.
In all cases, the relaxation method conserving the three invariants $\mass$, $\momentum$, and $\energy$ has a smaller error than the baseline time integration method.

\begin{figure}[!htb]
\centering
  \includegraphics[width=\textwidth]{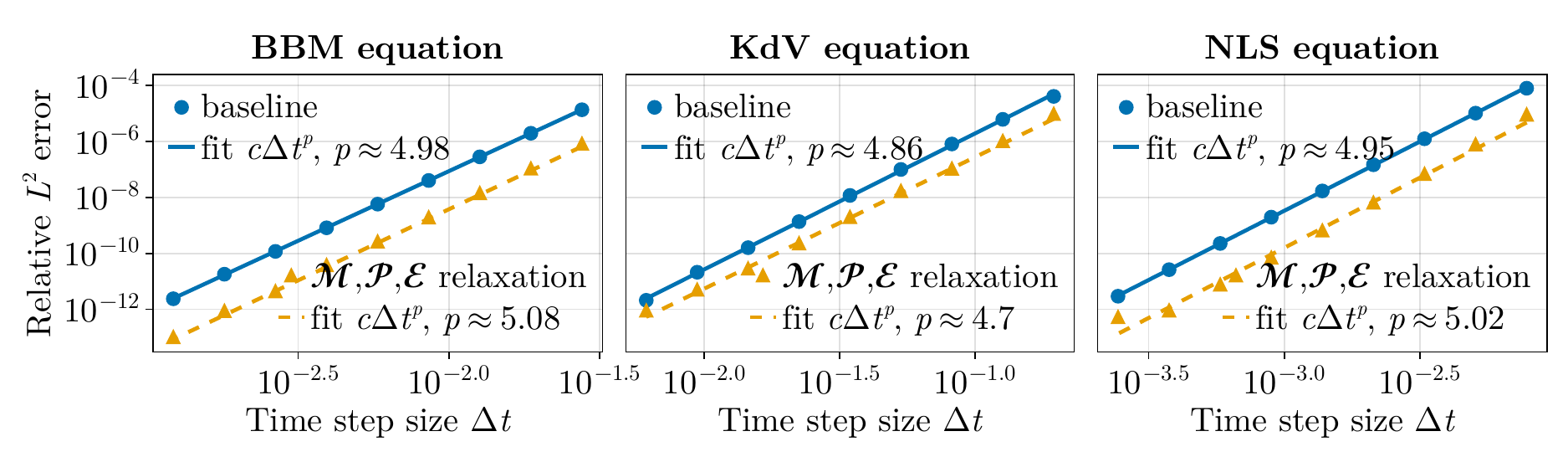}
  \caption{Relative $L^2$ errors at the final time for the Fourier Galerkin semidiscretizations
           of the BBM, KdV, and NLS equations.
           The spatial Fourier-Galerkin semidiscretization uses
           $400$ nodes in $[-50, 50]$ for BBM,
           $1100$ nodes in $[-200, 200]$ for KdV, and
           $1200$ nodes in $[-35, 35]$ for NLS.}
  \label{fig:convergence_in_time}
\end{figure}

\section{Error growth for multiple-soliton solutions}\label{sec:error-growth}

We measure the error growth in time for the two-soliton solutions of the
KdV and NLS equations described in Section~\ref{sec:semidiscrete_conservation}.
Moreover, we consider the three-soliton solution
\cite{hirota1971exact,hietarinta2007introduction}
\begin{equation}
\begin{aligned}
  u &= 12 \, \partial_x^2 \log F,
  \\
  F &= 1 + \e^{\eta_1} + \e^{\eta_2} + \e^{\eta_3}
        + a_{12} \e^{\eta_1 + \eta_2} + a_{13} \e^{\eta_1 + \eta_3}
        + a_{23} \e^{\eta_2 + \eta_3}
        + a_{12} a_{13} a_{23} \e^{\eta_1 + \eta_2 + \eta_3},
  \\
  \eta_i &= k_i (x - x_{i, 0}) - k_i^3 t,
\end{aligned}
\end{equation}
of the KdV equation with parameters
\begin{equation}
  k_1 = 0.75, \quad k_2 = 0.5, \quad k_3 = 0.25,
  \qquad
  x_{1, 0} = -100, \quad x_{2, 0} = 0, \quad x_{3, 0} = 100,
\end{equation}
and the spatial domain $[-400, 400]$ with periodic boundary conditions.
The time span $[0, 1500]$ ensures that the waves interact.
We again use Enzyme.jl \cite{moses2020instead,moses2021reverse} to compute the derivatives.
Since we use a domain twice as large as for the two-soliton solution,
we double the number of nodes accordingly ($N = 2^{10}$ for two solitons,
$N = 2^{11}$ for three solitons) to obtain comparable spatial errors.

For the NLS equation \eqref{eq:nls-u}, we use the three-soliton solution
used in \cite{biswas2024accurate} with $N = 2^{10}$ nodes in the spatial
domain $[-35, 35]$.

\begin{figure}[!htb]
\centering
  \includegraphics[width=\textwidth]{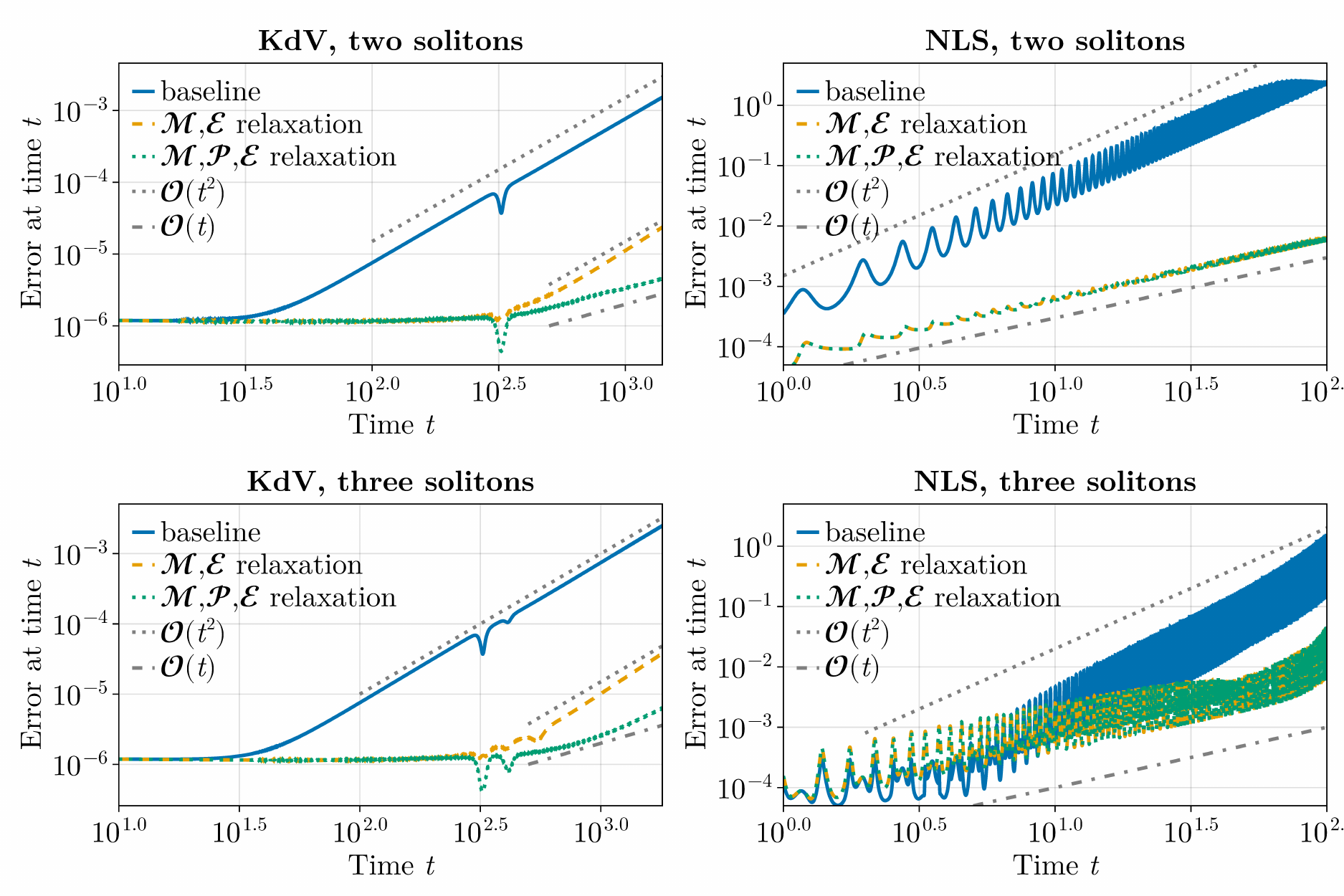}
  \caption{Error growth in time for two- and three-soliton solutions of the
           KdV and NLS equations discretized using Fourier Galerkin methods
           in space.
           The time integration is performed with the fifth-order method of
           \cite{kennedy2019higher} with
           $\Delta t = 0.1$ for the KdV equation as well as
           $\Delta t = 0.01$ (two solitons) and
           $\Delta t = 0.001$ (three solitons) for the NLS equation.}
  \label{fig:error_growth_multiple_solitons}
\end{figure}

The results are shown in Figure~\ref{fig:error_growth_multiple_solitons}.
For the KdV equation, the interaction time of two solitons can be seen
clearly in the error growth plot (the small bumps, e.g., around
$t = 10^{2.5}$). As expected, the error of the baseline method grows
quadratically in time. Before the first soliton interaction, the relaxation
methods  conserving either the mass and energy or all three invariants behave
similarly well. However, after the interaction, the method conserving only
two invariants has a quadratically growing error while the method conserving
all three invariants has a linearly growing error in time. These results
are in accordance with the theoretical predictions for two solitons
\cite{alvarez2010multi}. However, they appear to be better than expected
for three solitons.

\begin{figure}[!htb]
\centering
  \includegraphics[width=\textwidth]{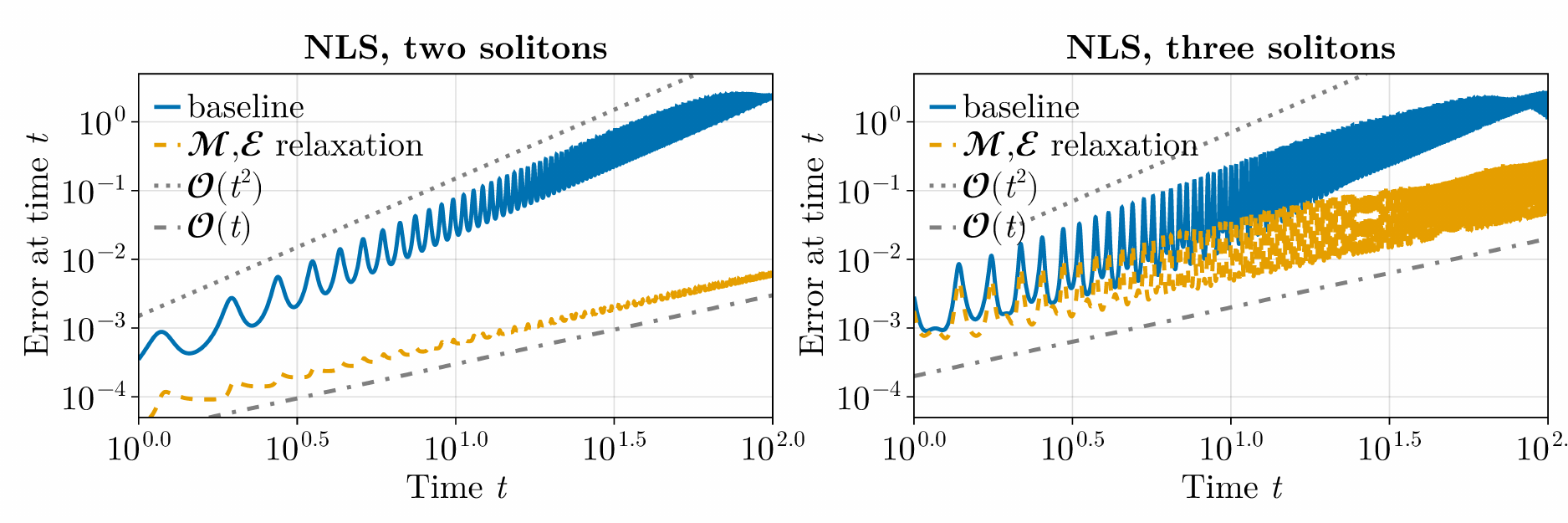}
  \caption{Error growth in time for two- and three-soliton solutions of the
           NLS equation discretized using Fourier collocation methods
           in space.
           The time integration is performed with the fifth-order method of
           \cite{kennedy2019higher} with
           $\Delta t = 0.01$ (two solitons) and
           $\Delta t = 0.002$ (three solitons).}
  \label{fig:error_growth_multiple_solitons_collocation}
\end{figure}

The numerical methods behave differently for the NLS equation. In accordance
with the numerical results of \cite{ranocha2025high} for two solitons,
we observe quadratic error growth in time for the baseline method and a
linear error growth for the mass- and energy-conserving method. However,
conserving the momentum in addition does not improve the results further.
For three solitons, conserving the momentum does also not improve the results
further. Moreover, both relaxation methods result in an eventually
quadratic error growth, in contrast to the numerical results shown in
\cite{ranocha2025high} obtained using a Fourier collocation method in space
that conserves only the mass and energy. The corresponding results are
shown in Figure~\ref{fig:error_growth_multiple_solitons_collocation}.

\subsection{Fourier collocation versus Galerkin methods for NLS}

\begin{figure}[!htb]
\centering
  \includegraphics[width=\textwidth]{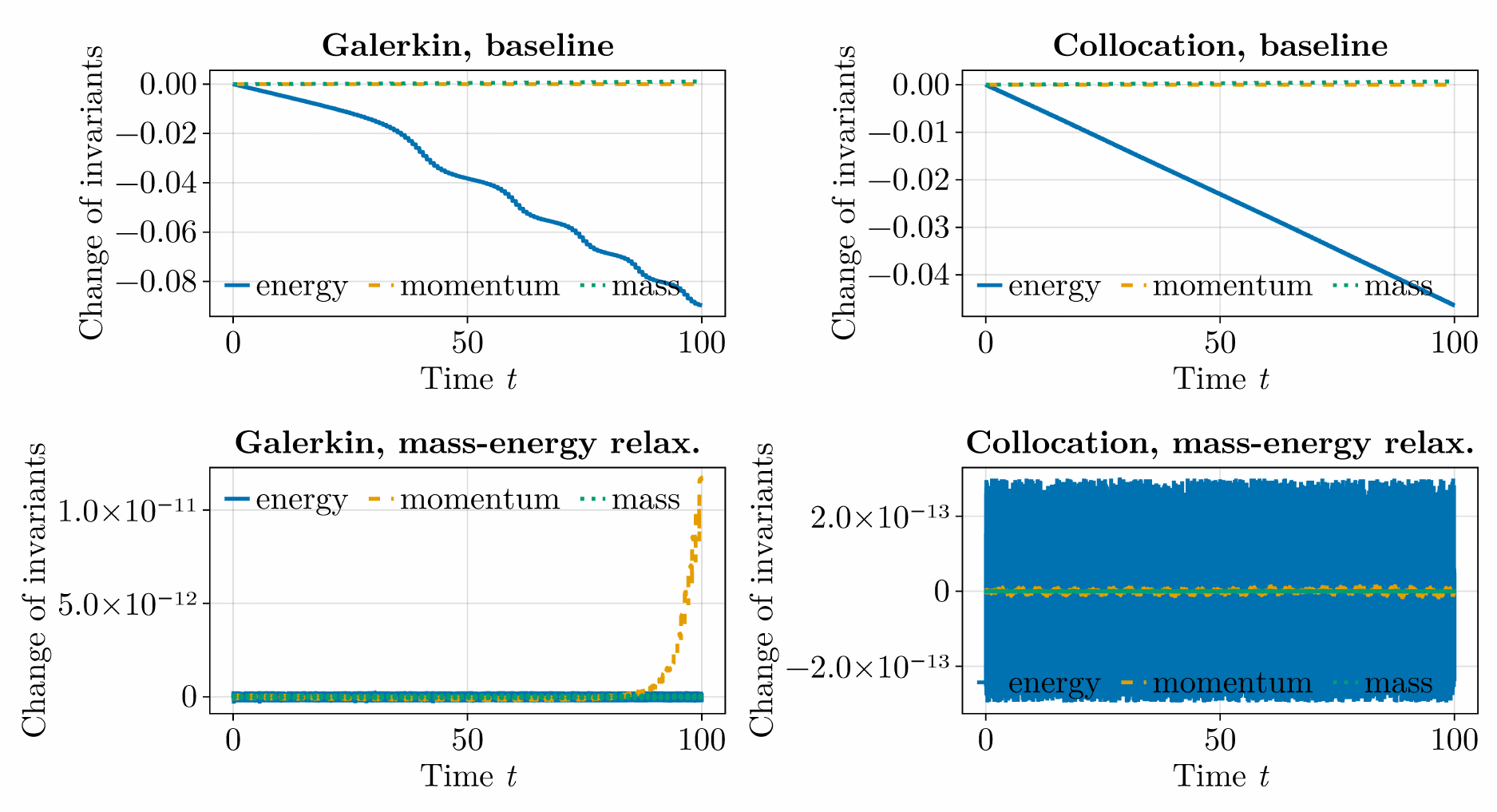}
  \caption{Change of invariants for the three-soliton solution of the
           NLS equation discretized using Fourier Galerkin and collocation
           methods in space.
           The time integration is performed with the fifth-order method of
           \cite{kennedy2019higher} with
           $\Delta t = 0.002$.}
  \label{fig:change_of_invariants_three_solitons_nls}
\end{figure}

To investigate the issue further, we compare the change of the invariants
over time for the three-soliton solutions of the NLS equation using
Fourier Galerkin and collocation methods in space; the results are shown
in Figure~\ref{fig:change_of_invariants_three_solitons_nls}. We clearly
observe that the momentum is much better conserved for the collocation
method with mass-energy relaxation, although there are no theoretical
guarantees for this. This indicates that other properties like symmetry
properties may play a role here and result in the improved error growth
of the collocation method.

Next, we consider the defocusing NLS equation with $\beta = -1$ and
gray/dark soliton solutions. Specifically, we use the one-gray-soliton
solution
\begin{equation}
  u(t, x) = \sqrt{b_0} \e^{\i (\kappa (x - c t) - \omega t)} \left(
    \i \sqrt{\frac{b_1}{b_0}} + \sqrt{1 - \frac{b_1}{b_0}} \tanh\biggl( \sqrt{\frac{b_0 - b_1}{2}} (x - c t)\biggr)
  \right)
\end{equation}
with background mass density $b_0 = 1.5$, minimal mass density $b_1 = 1$,
speed $c = 2 \sqrt{2}$, and derived parameters
\begin{equation}
  \kappa = \frac{c - \sqrt{2 b_1}}{2},
  \quad
  \omega = b_0 - \frac{c^2 - 2 b_1}{4},
\end{equation}
see, e.g., \cite{akhmediev1993first}.
This moving gray soliton decays to a background with constant rate of
change of the phase so that the total momentum is not zero. We computed
the domain boundaries $\pm 31.970600318475647$ such that the gray soliton
is periodic (up to machine precision) and use $2^8$ nodes for the spatial
semidiscretization.

Moreover, we consider the (moving) two-gray-soliton solution
\begin{equation}
  u(t, x) = \e^{\i (k x - k^2 t)} \widetilde{u}(t, x - 2 k t),
  \qquad
  k = 2,
\end{equation}
where
\begin{equation}
  \widetilde{u}(t, x)
  =
  \e^{-\i a_3 t} \frac{(2 a_3 - 4 a_1) \cosh(\mu t / 2) - 2 \sqrt{a_1 a_3} \cosh(2 p x / \sqrt{2}) - \i \mu \sinh(\mu t / 2)}{2 \sqrt{a_3} \cosh(\mu t / 2) + 2 \sqrt{a_1} \cosh(2 p x / \sqrt{2})}
\end{equation}
describes two colliding gray solitons in a steady reference frame
\cite{akhmediev1993first} with background mass density $a_3 = 1.5$,
minimum mass density $a_1 = 1$, and derived parameters
\begin{equation}
  \mu = 4 \sqrt{a_1 (a_3 - a_1)},
  \quad
  p = \sqrt{a_3 - a_1}.
\end{equation}
Since the two solitons collide at $t = 0$, we choose the time span
$[-70, 70]$. The spatial domain boundaries $\pm 409.97784129346803$
are chosen such that the background is periodic (up to machine precision).
We use $2^{11}$ nodes for the spatial semidiscretization.

\begin{figure}[!htb]
\centering
  \includegraphics[width=\textwidth]{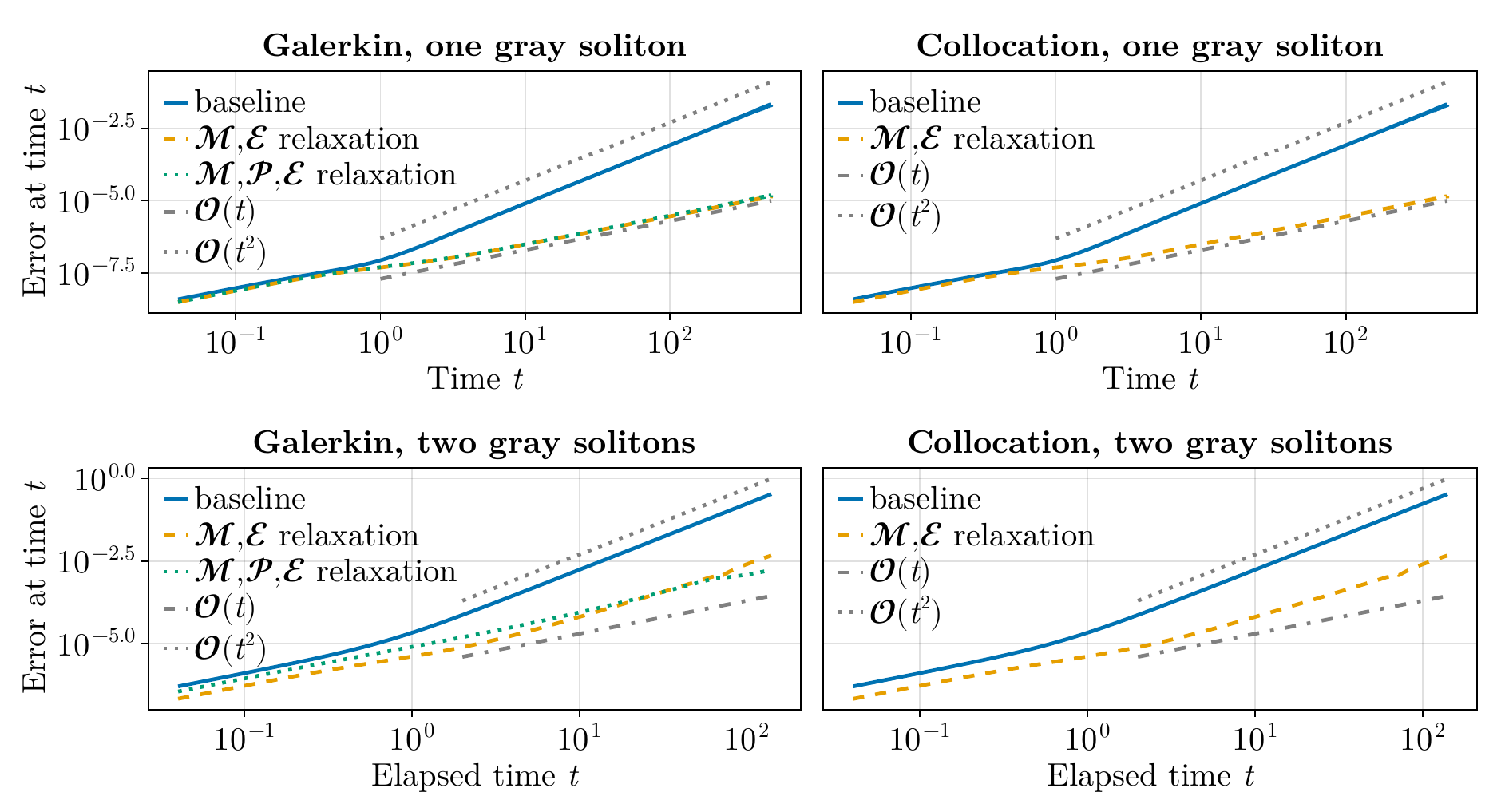}
  \caption{Error growth in time for moving one- and two-gray-soliton
           solutions of the NLS equation.
           The time integration is performed with the fifth-order method of
           \cite{kennedy2019higher} with $\Delta t = 0.04$.
           The two solitons collide after an elapsed time of about
           $t = 70$.}
  \label{fig:error_growth_gray_solitons}
\end{figure}

The error growth in time for these gray soliton solutions is shown in
Figure~\ref{fig:error_growth_gray_solitons}. As expected, the baseline
scheme has a quadratic error growth for all cases. For one gray soliton,
both spatial discretizations (Galerkin and collocation) with relaxation
(only mass and energy for collocation; also momentum for Galerkin) result
in a linear error growth in time. For two gray solitons, we observe
that the mass-, momentum-, and energy-conserving Fourier Galerkin method
with relaxation has a linear error growth while the mass- and
energy-conserving methods have a quadratically growing error. Thus, the
additional conservation of the momentum improves the long-time accuracy,
but increases the error for shorter times a bit.

\begin{figure}[!htb]
\centering
  \includegraphics[width=\textwidth]{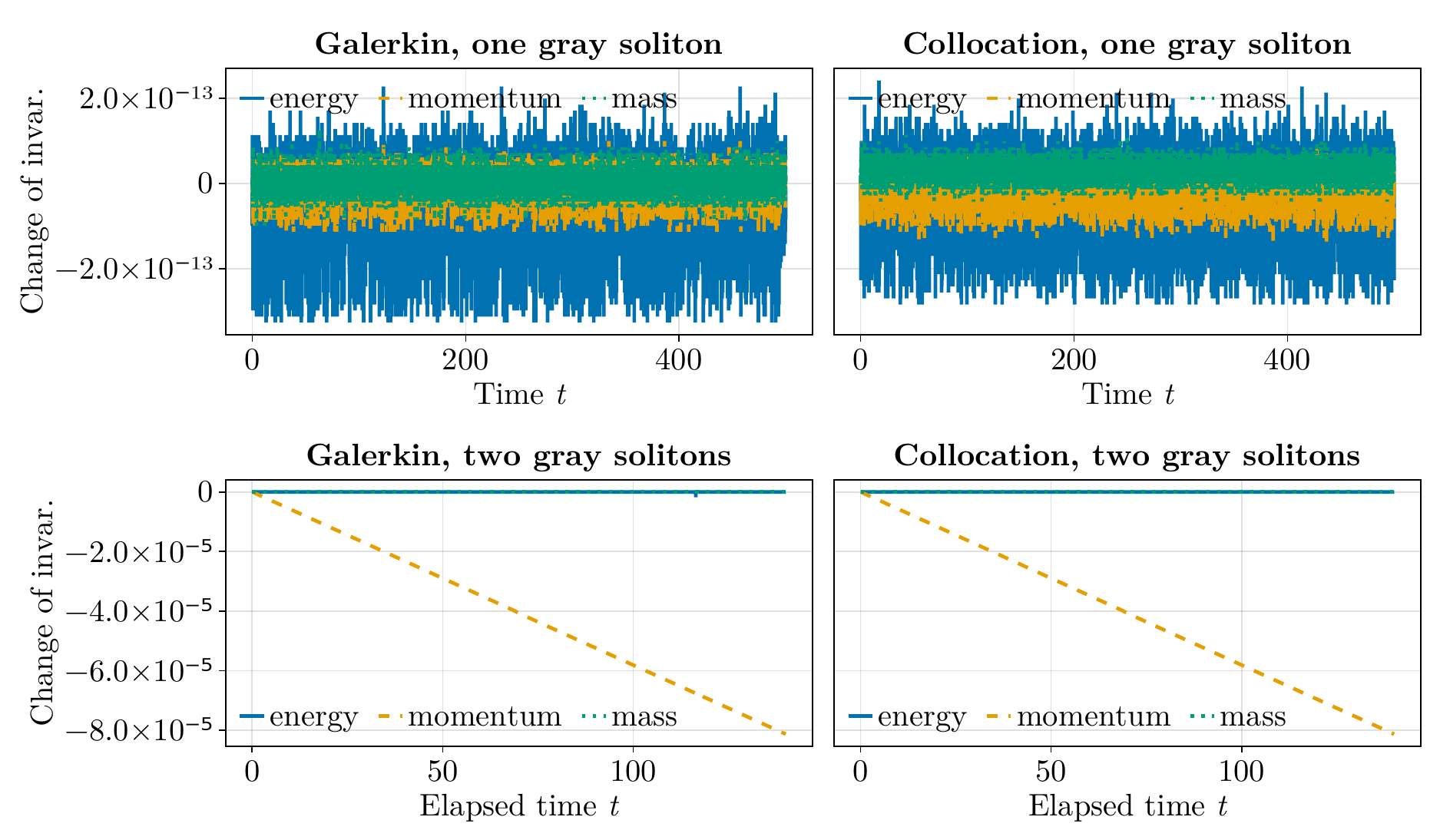}
  \caption{Change of invariants for moving one- and two-gray-soliton
           solutions of the NLS equation with mass- and energy-conserving
           relaxation.
           The time integration is performed with the fifth-order method of
           \cite{kennedy2019higher} with $\Delta t = 0.04$.}
  \label{fig:change_of_invariants_gray_solitons}
\end{figure}

These results differ from the previous findings for the three-soliton
solution of the focusing NLS equation. This is related to the conservation
properties of the relaxation methods enforcing only the mass and energy.
For a moving background, the momentum is essentially conserved for the
one-soliton solution (although this is not guaranteed by relaxation).
However, the momentum varies clearly for the two solitons if relaxation
is used to conserve only the mass and energy, see
Figure~\ref{fig:change_of_invariants_gray_solitons}.

\section{Performance comparisons}
\label{sec:performance_comparisons}

In this section we compare the computational performance of the methods
proposed here with some previous methods from the literature.  First,
we can compare the present method with the scheme we proposed previously for the NLS
equation (that scheme conserves only mass and energy, not momentum).
For both spatial discretizations (Fourier Galerkin here and Fourier
collocation there), the most costly part
is the FFT required to switch between nodal and modal space with
complexity $\mathcal{O}(M \log M)$ for $M$ nodes. Since the NLS has a cubic
nonlinearity, we need $M > 2 N$ for a Galerkin method while we only need $M = N$ for a Fourier
collocation method (conserving mass and energy \cite{ranocha2025high}).
To compute the energy with quartic nonlinearity (e.g., to apply relaxation
or projection to conserve the energy), we even require $M > 3 N / 2$ for the
Galerkin method. Thus, the Fourier Galerkin method is typically (at least)
twice as expensive as the Fourier collocation method for the same number
of degrees of freedom.

\subsection{Comparison with Gauss methods}
\label{sec:comparison_gauss_methods}

Next, we compare our proposed time integration approach to classical high-order, time-reversible/{\allowbreak}symmetric, and symplectic methods.
Thus, we choose the fully-implicit Gauss method \cite[Sections~II.1.3, IV.2.1, and V.2.1]{hairer2006geometric} with $s = 3$ stages, which is a sixth-order method that conserves all linear and quadratic invariants (if the nonlinear systems are solved exactly).
We use the implementation of Gauss methods from GeometricIntegrators.jl \cite{kraus2020GeometricIntegrators}.
In space, we still use the same Fourier Galerkin semidiscretizations as before, both for the Gauss method and the fifth-order method of \cite{kennedy2019higher} with relaxation.
The Gauss method is expected to be more expensive since it needs to solve nonlinear systems with dense Jacobians due to the spectral method and the nonlinear coupling.
Thus, we only present results for a single-soliton solution of the KdV equation as in Section~\ref{sec:fully_discrete_conservation} but with $N = 2^7$ nodes in space.
Since dense linear algebra solvers scale as $\mathcal{O}(N^3)$, we expect the (arguably more interesting) two- and three-soliton simulations to take roughly 500 to 4000 times longer due to the need for increased spatial resolution, making them infeasible to run.
Since the Gauss method has a higher order than the ARK method, we choose a larger time step size $\Delta t = 0.25$ for the Gauss method instead of $\Delta t = 0.1$ for the ARK method to obtain a comparable error in the first steps.
The runtime was measured on an Apple MacBook with M4 CPU.

\begin{figure}[!htb]
\centering
  \includegraphics[width=\textwidth]{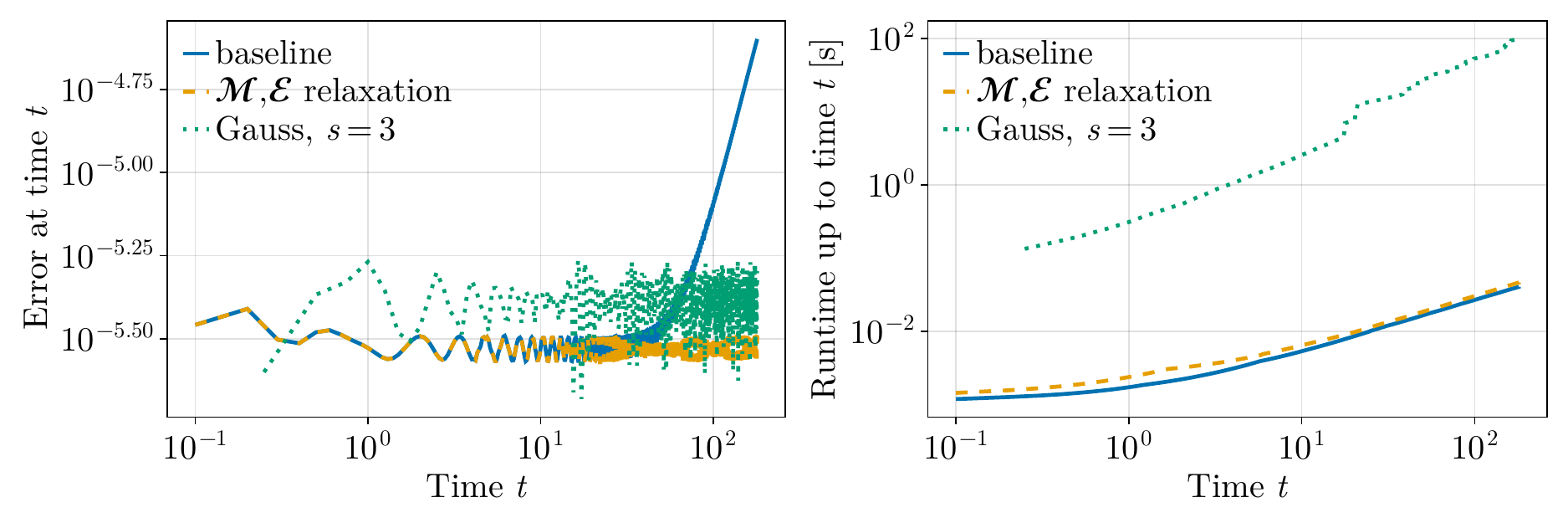}
  \caption{Comparison of the error and runtime of the fifth-order method of \cite{kennedy2019higher} without/with relaxation and the sixth-order Gauss method for a single-soliton solution of the KdV equation.}
  \label{fig:comparison_gauss_methods}
\end{figure}

The results shown in Figure~\ref{fig:comparison_gauss_methods} confirm the expected behavior.
The baseline ARK shows a significantly bigger error growth than both the ARK method with relaxation and the Gauss method.
However, the ARK method with relaxation is roughly three orders of magnitude faster than the Gauss method (100~seconds compared to 0.05~seconds).
This is expected since it avoids the need to solve nonlinear systems and reduces the linear system solves to diagonal systems, thus reducing the complexity from $\mathcal{O}(N^3)$ to $\mathcal{O}(N \log N)$ due to the FFT.

\subsection{Comparison with other methods from the literature}
\label{sec:comparison_other_methods}

For the existing schemes that conserve all three of mass, momentum,
and energy, unfortunately no code is publicly available \cite{akrivis2025high,zheng2024invariants}.
Instead, we compare with two methods that conserve just two quantities and
for which code is available.  All tests were performed on a workstation with two sockets of
20 dual-threaded Intel Xeon Gold 6230 CPUs running Ubuntu 22.04.

The method of Andrews \& Farrell \cite{andrews2025conservative} conserves
mass and energy for BBM.
We apply their implementation and ours to the problem presented in Section
4.2.1 of that work.  Namely, we solve the BBM equation on the domain $x\in[-50,50]$
with periodic boundary conditions and initial condition
\begin{align}
    u(x,t=0) & = \frac{3\sqrt{5}-3}{2} \sech\left(\frac{\sqrt{5}-1}{4} x\right)^2.
\end{align}
We solve up to time $t=2\times10^4$, and use a mesh with 100 degrees of freedom for both
methods.  The method proposed here runs in about 0.40 seconds while that of
Andrews \& Farrell runs in 1141 seconds.

Finally, we also compare with the code of Bai et al.\ \cite{bai2024high}, which
conserves mass and energy for the NLS equation.
We consider the one-soliton problem from Section~4 of that work, given by
\begin{align*}
    u(x,t) & = \operatorname{sech}(x+4t) \exp(-\i (2x+3t)).
\end{align*}
We solve the problem on the domain $x\in[-40,40]$ for $0\le t \le 1$ and measure
the $L^2$ norm of the error at the final time.  Using the
same discretization considered in Figure 4.4(b) of that work the wall clock time
for a run using $\Delta t = 1/512$ and 1024 points in space is approximately 90
seconds and results in an error of $1.26\times 10^{-6}$.
We previously compared our mass- and energy-conserving Fourier collocation
methods with relaxation to their methods in \cite{ranocha2025high}, and found
that our methods were orders of magnitude faster and significantly
more accurate.
Since Fourier Galerkin methods are roughly twice as expensive as the Fourier
collocation methods used in \cite{ranocha2025high}, they are still orders
of magnitude faster than the methods of \cite{bai2024high}.
Specifically, the Fourier collocation code runs in 0.17 seconds, the Fourier
Galerkin code runs in 0.36 seconds, and the code of Bai et al.\ runs in
90 seconds.  The codes proposed here yield errors of $10^{-11}$ or less, while
that of Bai et al.\ yields an error of $1.26\times10^{-6}$.

We also compare the error growth in time of the method of Bai et al.\ \cite{bai2024high} with our scheme.
We use the same one-soliton problem as for the performance comparison reported above, but switch to periodic boundary conditions on the domain $[-40, 40]$ with $N = 1024$ degrees of freedom in space and $\Delta t = 0.01$ for the time integration.

\begin{figure}[!htb]
\centering
  \includegraphics[width=0.5\textwidth]{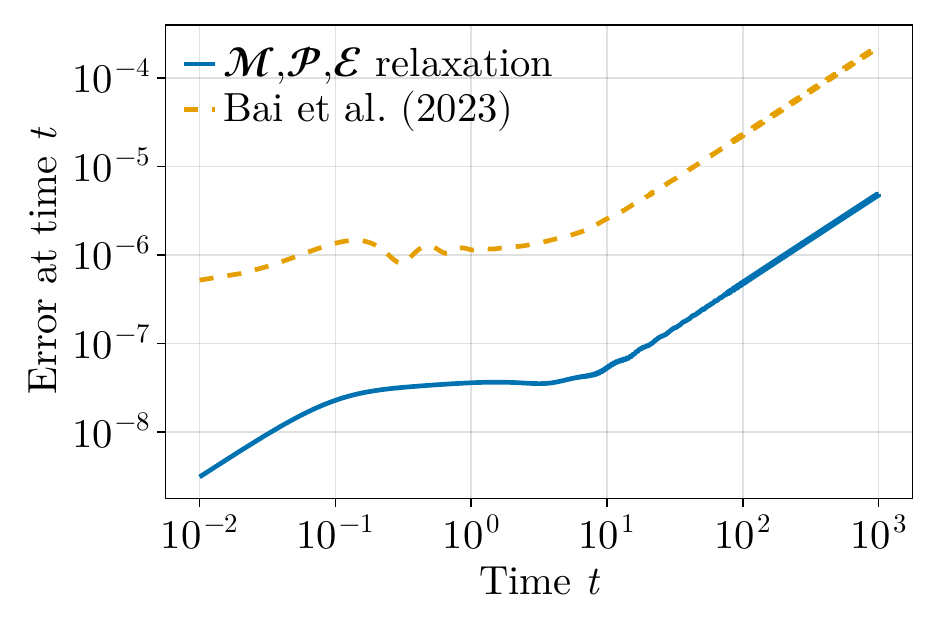}
  \caption{Comparison of the error growth of the Fourier-Galerkin method combined with the fifth-order method of \cite{kennedy2019higher} with relaxation and the method of Bai et al.\ \cite{bai2024high} for a single-soliton solution of the NLS equation.}
  \label{fig:comparison_bai_et_al_error_growth}
\end{figure}

The results shown in Figure~\ref{fig:comparison_bai_et_al_error_growth} are as expected.
Conservation of invariants leads to a linear error growth in time.
Given the same numbers of degrees of freedom and time step size, our method is two orders of magnitude more accurate and maintains the same performance advantage reported earlier.

\section{Hyperbolic approximation of the nonlinear Schr\"odinger equation}
\label{sec:hyperbolization_nls}

We extend the structure-preserving methods to a hyperbolic approximation
of the NLS equation. There are two such hyperbolizations conserving
(appropriate approximations of) the mass, momentum, and energy. The
first one is based on the hydrodynamic formulation and is thus mostly
useful for the defocusing case without vacuum \cite{dhaouadi2019extended};
it conserves the mass and momentum as linear invariants and the energy
as nonlinear invariant. Thus, classical relaxation in time
\cite{ranocha2020relaxation,ranocha2020general}
can be used to conserve all three invariants. Here, we focus on the
second hyperbolization \cite{biswas2025hyperbolic} conserving all three
invariants with a structure similar to the original NLS equation.
This hyperbolization does not require the absence of vacuum. It is given
by \cite{biswas2025hyperbolic}
\begin{equation}
\label{eq:nlsh-q}
\begin{aligned}
  \i q^0_t + q^1_x &= -\beta |q^0|^2 q^0, \\
  \i \tau q^1_t - q^0_x &= -q^1,
\end{aligned}
\end{equation}
where $\tau > 0$ is a relaxation parameter; as $\tau \to 0$, the
solution $q^0$ of the hyperbolization \eqref{eq:nlsh-q} converges formally
to the original NLS equation \eqref{eq:nls-u}. Introducing the real
and imaginary parts $q^0 = v + \i w$ and $q^1 = \nu + \i \omega$ as in
\cite{ranocha2025high}, we obtain
\begin{equation}
\label{eq:nlsh-vw}
\begin{aligned}
  v_t &= -\omega_x - \beta \bigl( v^2 + w^2 \bigr) w, \\
  w_t &= \nu_x + \beta \bigl( v^2 + w^2 \bigr) v, \\
  \tau \nu_t &= w_x - \omega, \\
  \tau \omega_t &= -v_x + \nu.
\end{aligned}
\end{equation}
Using this formulation, the invariants of \eqref{eq:nlsh-q}
\cite{biswas2025hyperbolic} can be written as
\begin{equation}
\label{eq:nlsh-vw_invariants}
\begin{aligned}
  \mass &= \int \left( v^2 + w^2 + \tau \nu^2 + \tau \omega^2 \right) \dif x,
  \\
  \momentum &= \int \left( v w_x - v_x w + \tau \nu \omega_x - \tau \nu_x \omega \right) \dif x = 2 \int \left( v w_x + \tau \nu \omega_x \right) \dif x,
  \\
  \energy &= \int \left( 2 \nu v_x - \nu^2 + 2 \omega w_x - \omega^2 - \frac{\beta}{2} (v^2 + w^2)^2 \right) \dif x.
\end{aligned}
\end{equation}
A Fourier collocation semidiscretization of \eqref{eq:nlsh-vw} conserving
the mass and energy has been developed in \cite{ranocha2025high}. Here, we
consider the Fourier Galerkin semidiscretization
\begin{equation}
\label{eq:nlsh-vw_semi}
\begin{aligned}
  v_t &= -\omega_x - \beta P \bigl( v^2 + w^2 \bigr) w, \\
  w_t &= \nu_x + \beta P \bigl( v^2 + w^2 \bigr) v, \\
  \tau \nu_t &= w_x - \omega, \\
  \tau \omega_t &= -v_x + \nu.
\end{aligned}
\end{equation}
\begin{theorem}
\label{thm:nlsh-vw_semi}
  The Fourier Galerkin semidiscretization \eqref{eq:nlsh-vw_semi}
  of the hyperbolic NLS equation \eqref{eq:nlsh-vw} conserves
  the total mass, momentum, and energy \eqref{eq:nlsh-vw_invariants}.
\end{theorem}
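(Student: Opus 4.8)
The plan is to mirror the strategy already used for Theorems~\ref{thm:kdv_semi} and~\ref{thm:nls-vw_semi}: for each of the three functionals I differentiate in time, substitute the four equations of the semidiscretization \eqref{eq:nlsh-vw_semi}, and reduce everything to zero using only three tools. These are the anti-symmetry of $\partial_x$ under the periodic $L^2$ inner product (so that $\langle f, g_x \rangle = -\langle f_x, g \rangle$ and in particular $\langle f, f_x \rangle = 0$), the self-adjointness of the $L^2$ projection $P$ together with the fact that $P$ acts as the identity on $T_k$ (so $P$ may be dropped whenever it is paired against any element of $T_k$), and the observation that $v, w, \nu, \omega$ and all their $x$-derivatives lie in $T_k$.

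The mass and momentum are routine. Differentiating $\mass = \langle v, v\rangle + \langle w, w\rangle + \tau\langle \nu, \nu\rangle + \tau\langle \omega, \omega\rangle$ and inserting \eqref{eq:nlsh-vw_semi}, the dispersive couplings and the algebraic couplings $\langle \nu, \omega\rangle, \langle \omega, \nu\rangle$ pair off by (anti)symmetry, while the two cubic contributions reduce, after dropping $P$ against $v, w \in T_k$, to $-\beta\int(v^2+w^2)vw + \beta\int(v^2+w^2)wv = 0$. For the momentum I use the compact form $\momentum = 2\langle v, w_x\rangle + 2\tau\langle \nu, \omega_x\rangle$; after differentiation and integration by parts the purely linear terms vanish by symmetry of the inner product and periodicity, and the surviving cubic terms collapse via $v v_x + w w_x = \tfrac12(v^2+w^2)_x$ to $-\tfrac{\beta}{2}\int \bigl((v^2+w^2)^2\bigr)_x = 0$.

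The energy is the real work and the main obstacle. Writing $\energy = 2\langle \nu, v_x\rangle - \langle \nu,\nu\rangle + 2\langle \omega, w_x\rangle - \langle \omega,\omega\rangle - \tfrac{\beta}{2}\langle v^2+w^2, v^2+w^2\rangle$, I differentiate and regroup according to which time derivative appears. The terms multiplying $\nu_t$ and $\omega_t$ are rewritten using the last two equations of \eqref{eq:nlsh-vw_semi} in the forms $v_x - \nu = -\tau\omega_t$ and $w_x - \omega = \tau\nu_t$, whereupon they combine into $-2\tau\langle \nu_t, \omega_t\rangle + 2\tau\langle \omega_t, \nu_t\rangle = 0$ by symmetry. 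The terms multiplying $v_t$ and $w_t$ assemble into $2\langle A, B_P\rangle - 2\langle B, A_P\rangle$, where $A = \nu_x + \beta(v^2+w^2)v$ and $B = \omega_x + \beta(v^2+w^2)w$ carry the \emph{unprojected} nonlinearity, while $A_P = \nu_x + \beta P(v^2+w^2)v = w_t$ and $B_P = \omega_x + \beta P(v^2+w^2)w = -v_t$ carry the projected one. Expanding both inner products, the quadratic and the two mixed cubic terms match immediately after dropping $P$ against the $T_k$ factors $\nu_x, \omega_x$; the only delicate contribution is the quartic cross term, where I invoke self-adjointness of $P$ to obtain
\begin{equation*}
  \bigl\langle (v^2+w^2)v,\, P\bigl((v^2+w^2)w\bigr)\bigr\rangle
  = \bigl\langle P\bigl((v^2+w^2)v\bigr),\, (v^2+w^2)w\bigr\rangle
  = \bigl\langle (v^2+w^2)w,\, P\bigl((v^2+w^2)v\bigr)\bigr\rangle,
\end{equation*}
so that $\langle A, B_P\rangle = \langle B, A_P\rangle$ and hence $\partial_t\energy = 0$. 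I expect this quartic term to be the one subtle point: because the degree-$3k$ factors $(v^2+w^2)v$ and $(v^2+w^2)w$ do not lie in $T_k$, the identity cannot be obtained by merely removing $P$, and the symmetry that rescues it is exactly the self-adjointness of the orthogonal projection $P$.
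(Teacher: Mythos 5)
Your proposal is correct and follows essentially the same route as the paper's proof: differentiate each invariant, substitute the semidiscretization, and cancel all terms using the anti-symmetry of $\partial_x$, the fact that $v, w, \nu, \omega$ and their derivatives lie in $T_k$, and the exactness (in particular self-adjointness) of the $L^2$ projection $P$, which is precisely what resolves the quartic cross term. Your reorganization of the energy computation---expressing $v_x - \nu = -\tau\omega_t$ and $w_x - \omega = \tau\nu_t$ rather than expanding $\nu_t, \omega_t$ explicitly, and the $A$, $B$, $A_P$, $B_P$ bookkeeping---is a mild streamlining of the paper's term-by-term cancellation, not a different argument.
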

\begin{proof}
  The total mass is conserved since
  \begin{equation}
  \begin{aligned}
    \partial_t \mass
    &=
    2 \langle v, v_t \rangle
    + 2 \langle w, w_t \rangle
    + 2 \langle \nu, \tau \nu_t \rangle
    + 2 \langle \omega, \tau \omega_t \rangle
    \\
    &=
    2 \langle v, -\omega_x - \beta P (v^2 + w^2) w \rangle
    + 2 \langle w, \nu_x + \beta P (v^2 + w^2) v \rangle
    \\
    &\quad
    + 2 \langle \nu, w_x - \omega \rangle
    + 2 \langle \omega, -v_x + \nu \rangle
    \\
    &=
    -2 \beta \int (v^2 + w^2) (v w - w v) \dif x
    =
    0.
  \end{aligned}
  \end{equation}
  Similarly, the total momentum is conserved since
  \begin{equation}
  \begin{aligned}
    \partial_t \momentum
    &=
    2 \langle w_x, v_t \rangle
    - 2 \langle v_x, w_t \rangle
    + 2 \langle \omega_x, \tau \nu_t \rangle
    - 2 \langle \nu_x, \tau \omega_t \rangle
    \\
    &=
    2 \langle w_x, -\omega_x - \beta P (v^2 + w^2) w \rangle
    - 2 \langle v_x, \nu_x + \beta P (v^2 + w^2) v \rangle
    \\
    &\quad
    + 2 \langle \omega_x, w_x - \omega \rangle
    - 2 \langle \nu_x, -v_x + \nu \rangle
    \\
    &=
    -2 \beta \int (v^2 + w^2) (w w_x + v v_x) \dif x
    =
    - \beta \int (v^2 + w^2)^2_x \dif x
    =
    0.
  \end{aligned}
  \end{equation}
  Finally, the total energy is conserved since
  \begin{equation}
  \begin{aligned}
    \partial_t \energy
    &=
    - 2 \langle \nu_x, v_t \rangle
    - 2 \beta \langle (v^2 + w^2) v, v_t \rangle
    - 2 \langle \omega_x, w_t \rangle
    - 2 \beta \langle (v^2 + w^2) w, w_t \rangle
    \\
    &\quad
    + 2 \langle v_x, \nu_t \rangle
    - 2 \langle \nu, \nu_t \rangle
    + 2 \langle w_x, \omega_t \rangle
    - 2 \langle \omega, \omega_t \rangle
    \\
    &=
    - 2 \langle \nu_x, -\omega_x - \beta P (v^2 + w^2) w \rangle
    - 2 \beta \langle (v^2 + w^2) v, -\omega_x - \beta P (v^2 + w^2) w \rangle
    \\
    &\quad
    - 2 \langle \omega_x, \nu_x + \beta P (v^2 + w^2) v \rangle
    - 2 \beta \langle (v^2 + w^2) w, \nu_x + \beta P (v^2 + w^2) v \rangle
    \\
    &\quad
    + 2 \tau^{-1} \langle v_x, w_x - \omega \rangle
    - 2 \tau^{-1} \langle \nu, w_x - \omega \rangle
    \\
    &\quad
    + 2 \tau^{-1} \langle w_x, -v_x + \nu \rangle
    - 2 \tau^{-1} \langle \omega, -v_x + \nu \rangle
    =
    0.
  \end{aligned}
  \end{equation}
  As before, we used the exactness of the $L^2$ projection $P$ for these
  computations.
\end{proof}

To apply mass- and energy-conserving relaxation methods, we use the
simplified projection method to conserve the total mass described in
\cite{ranocha2025high}, i.e., we scale $v$, $w$ by $\alpha_1$
and $\nu$, $\omega$ by $\alpha_2$, where
\begin{equation}
\label{eq:mass_projection_nlsh}
\begin{aligned}
  \alpha_1
  &=
  \frac{p^2 (\tau - 1) \tau^2 + \sqrt{-p^2 q^2 (\tau - 1)^2 \tau + c (q^2 + p^2 \tau^3)}}{q^2 + p^2 \tau^3},
  \\
  \alpha_2
  &=
  \frac{q^2 (1 - \tau) + \tau \sqrt{-p^2 q^2 (\tau - 1)^2 \tau + c (q^2 + p^2 \tau^3)}}{q^2 + p^2 \tau^3},
\end{aligned}
\end{equation}
$c$ is the desired value of the mass \eqref{eq:nlsh-vw_invariants}, and
\begin{equation}
  q^2 = \| v \|_{L^2}^2 + \| w \|_{L^2}^2,
  \qquad
  p^2 = \| \nu \|_{L^2}^2 + \| \omega \|_{L^2}^2.
\end{equation}

\begin{figure}[!htb]
\centering
  \includegraphics[width=\textwidth]{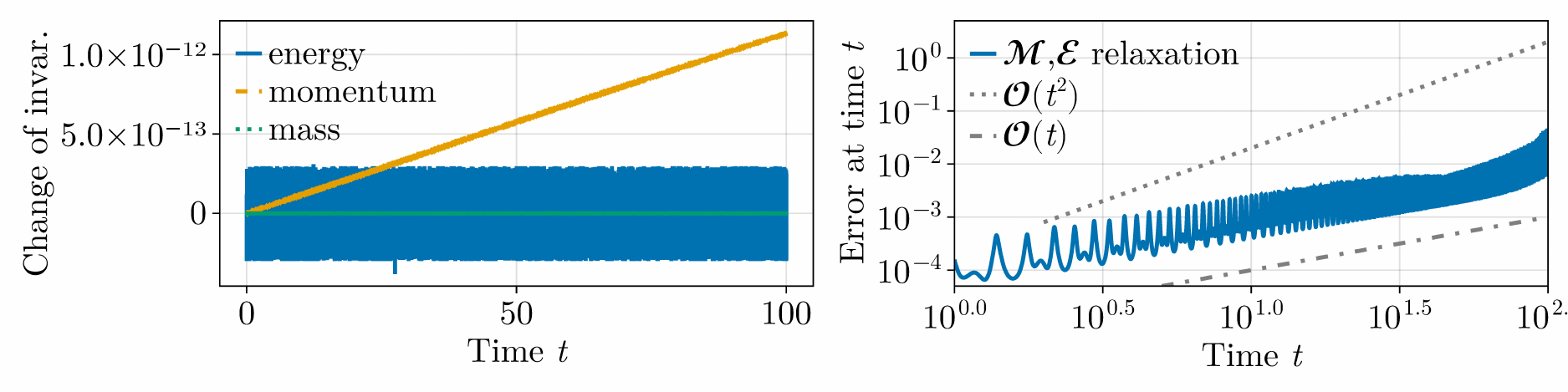}
  \caption{Numerical results for the hyperbolized NLS equation with
           $\tau = 10^{-9}$ initialized with the three-soliton solution
           of the NLS equation, which is used to compute the error.
           The time integration is performed with the fifth-order method of
           \cite{kennedy2019higher} with $\Delta t = 0.001$ and
           mass- and energy-conserving relaxation.}
  \label{fig:hyperbolized_nls}
\end{figure}

The hyperbolization behaves similarly to the original NLS equation.
Thus, we only show a subset of numerical results.
Figure~\ref{fig:hyperbolized_nls} demonstrates that the results agree
very well with the original NLS equation for $\tau = 10^{-9}$.
In particular, we observe an error growth behavior similar to
Figure~\ref{fig:error_growth_multiple_solitons}; mass and energy are
conserved as expected, and the momentum grows slightly (since it is not
enforced to be constant by relaxation here).

\section{Conclusions and future directions}
\label{sec:summary}
The schemes we have proposed are capable of preserving mass, momentum, and
energy to machine precision, while also running faster (by orders of magnitude)
than other recently-proposed schemes that conserve only two of these
quantities.  Furthermore, they can be of arbitrarily high order, simply by using
sufficiently high-order Runge-Kutta methods in time.  Their performance advantage
over implicit methods is due to the fact that they only require the solution
of a scalar algebraic equation at each step.

For the numerical solution of integrable systems, it is natural to ask whether
similar schemes could be designed to conserve even more invariants.  This seems
difficult in terms of both the spatial and temporal discretization.  We have found
that Fourier Galerkin methods do not preserve, for instance, the fourth invariant
of KdV.  Meanwhile, the efficiency of our projection-based relaxation method
relies crucially on the efficiency of the inner projection onto the manifolds
defined by the first invariants. While the idea can be extended directly to
projections that conserve $N$ invariants (with relaxation to conserve invariant
$N+1$), efficiency will likely be significantly reduced if the inner projections
are too expensive.  The Toda lattice and the Ablowitz-Ladik lattice are
natural systems on which to test time discretizations that may preserve several
invariants.

While our description and experiments have been restricted to one spatial
dimension, the schemes can be extended in a natural way to higher dimensions.
In order to conserve momentum, we require the imposition of periodic boundary
conditions.  With more general boundary conditions, mass and energy are still
conserved, but we note that conservation of mass and energy can be achieved
also with other SBP spatial discretizations along with previously-published
time discretization approaches.
Another natural question is whether the effort to conserve three invariants is
worthwhile.  Of course, conservation is already
highly desirable from a physical point of view, and our experiments show that
in some cases conserving all three quantities gives a significant quantitative
advantage over conserving just two, with respect to long-time error growth.
In the end, the choice of whether to use a structure-preserving method depends
on the application and the goals of the computation.
The downside of imposing all three discrete conservation laws is that one must
use Fourier Galerkin methods, which cost notably more than collocation methods.
A more detailed comparison of the cost of different conservation techniques
(especially with respect to time) is the subject of future work.

\appendix

\section*{Acknowledgments}

HR was supported by the Deutsche Forschungsgemeinschaft
(DFG, German Research Foundation, project numbers 513301895 and 528753982
as well as within the DFG priority program SPP~2410 with project number 526031774).
DK was supported by funding from King Abdullah University of Science and Technology.

We thank Mario Ricchiuto for hosting us in Bordeaux during the week
October 6--10 2025, where this project was initiated in breakfast
discussions.

\printbibliography

\appendix

\end{document}